\theoremstyle{plain}
\newtheorem{theorem}{Theorem}
\newtheorem{theoremMain}{Theorem}
\newtheorem{lemma}[theorem]{Lemma}
\newtheorem{proposition}[theorem]{Proposition}
\newtheorem{corollaryThMn}{Corollary}[theoremMain]
\newtheorem{corollaryTh}{Corollary}[theorem]
\theoremstyle{definition}
\theoremstyle{remark}
\newtheorem{remarkTh}{Remark}[theorem]
\theoremstyle{plain}
\newtheorem*{theorem*}{Theorem}
\newtheorem*{lemma*}{Lemma}
\newtheorem*{proposition*}{Proposition}
\newtheorem*{statement*}{Statement}
\newtheorem*{corollary*}{Corollary}
\theoremstyle{definition}
\newtheorem*{definition*}{Definition}
\theoremstyle{remark}
\newtheorem*{notation*}{Notation}
\newtheorem*{remark*}{Remark}
\newtheorem*{example*}{Example}
\renewcommand{\thetheoremMain}%
{\Alph{theoremMain}}
\renewcommand{\thetheoremMainBis}%
{\Alph{theoremMainBisRus}$^{\mathrm{bis}}$}
\begin{document}
\title[Schr\"{o}dinger operators with complex singular potentials]{Schr\"{o}dinger operators with complex singular
potentials}

\author[V. Mikhailets]{Vladimir Mikhailets}

\address{Institute of Mathematics \\
         National Academy of Science of Ukraine \\
         3 Tereshchenkiv\-s'ka Str. \\
         01601 Kyiv-4 \\
         Ukraine}

\email{mikhailets@imath.kiev.ua}

\author[V. Molyboga]{Volodymyr Molyboga}

\address{Institute of Mathematics \\
         National Academy of Science of Ukraine \\
         3 Tereshchenkiv\-s'ka Str. \\
         01601 Kyiv-4 \\
         Ukraine}

\email{molyboga@imath.kiev.ua}


\keywords{1-D Schr\"{o}dinger operator, complex potential, distributional potential, resolvent approximation, localization
of spectrum}

\subjclass[2000]{Primary 34L05; Secondary 34L40, 47A55}

\dedicatory{To Myroslav Lvovych Gorbachuk on the occasion of his 75th birthday.}




\begin{abstract}
We study one-dimensional Schr\"{o}dinger operators $\mathrm{S}(q)$ on the space $L^{2}(\mathbb{R})$
with potentials $q$ being complex-valued generalized functions from the negative space $H_{unif}^{-1}(\mathbb{R})$.
Particularly the class $H_{unif}^{-1}(\mathbb{R})$ contains periodic and almost periodic
$H_{loc}^{-1}(\mathbb{R})$-functions. We establish an equivalence of the various definitions of the operators
$\mathrm{S}(q)$, investigate their approximation by operators with smooth potentials from the space
$L_{unif}^{1}(\mathbb{R})$ and prove that the spectrum of each operator $\mathrm{S}(q)$ lies within a certain parabola.
\end{abstract}

\maketitle


\section{Introduction and main results}\label{sec:Int}
In the complex Hilbert space $L^{2}(\mathbb{R})$ we consider a Schr\"{o}dinger operator
\[\mathrm{S}(q)=-\dfrac{d^2}{dx^2}+q(x)\]
with potential $q$ that is a complex-valued distribution from the space
$H_{unif}^{-1}(\mathbb{R}) \subset H_{loc}^{-1}(\mathbb{R})$.
Recall that $H_{loc}^{-1}(\mathbb{R})$ is a dual to the space $H_{comp}^1(\mathbb{R})$ of functions
in $H^1(\mathbb{R})$ with compact support
and that every $q\in H_{loc}^{-1}(\mathbb{R})$ can be represented as $Q'$ for $Q\in L_{loc}^2(\mathbb{R})$.
Then the operator $\mathrm{S}(q)$ can be rigorously defined e. g. by so-called regularization method
that was used in \cite{AtEvZt1988} in the particular case $q(x) = 1/x$
and then developed for generic distributional potential functions in $H_{loc}^{-1}(\mathbb{R})$ in \cite{SvSh1999, SvSh2003};
see also recent extensions to more general differential expressions in \cite{GrMi2010, GrMi2011}.
Namely, the regularization method suggests to define $\mathrm{S}(q)$ via
\begin{equation}\label{eq_10}
\mathrm{S}(q) y = l[y] = - (y' - Qy)' - Qy'
\end{equation}
on the natural maximal domain
\begin{equation}\label{eq_11}
\mathrm{Dom} (\mathrm{S}(q)) =
\left\lbrace y \in L^2(\mathbb{R})\;\left|\;y, y' - Qy\in \mathrm{AC}_{loc}(\mathbb{R}),\;
	l[y] \in L^2(\mathbb{R})\right.\right\rbrace,
\end{equation}
here $\mathrm{AC}_{loc}(\mathbb{R})$ is the space of functions that are locally absolutely continuous.
It is easy to see that $\mathrm{S}(q)y = - y'' + qy$ in the sense of distributions and the above definition
does not depend on the particular choice of the primitive $Q \in L_{loc}^2(\mathbb{R})$.

One can also introduce the minimal operator $\mathrm{S}_0(q)$, which is the closure of the restriction
$\mathrm{S}_{00}(q)$ of $\mathrm{S}(q)$ onto the set of functions with compact support, i. e. onto
\[ \mathrm{Dom}(\mathrm{S}_{00}(q)) :=\left\lbrace y\in L_{comp}^2(\mathbb{R})\;
  \left|\;y, y' - Qy\in \mathrm{AC}_{loc}(\mathbb{R}),\;
  	l[y] \in L^2(\mathbb{R})\right.\right\rbrace. \]

In the case when potential function $q$ is real-valued the operator $\mathrm{S}_{00}(q)$
(and hence $\mathrm{S}_0(q)$) is symmetric;
moreover, in a standard manner \cite{MiMl6} one can prove that $\mathrm{S}(q)$ is adjoint of $\mathrm{S}_0(q)$.
An important question preceding any further analysis of the operator $\mathrm{S}(q)$ is whether it is self-adjoint,
i. e. $\mathrm{S}(q)=\mathrm{S}_0(q)$.
The case when the potential belongs to the space $H_{unif}^{-1}(\mathbb{R})$ was investigated in \cite{HrMk2001}.
We recall \cite{HrMk2001} that any $q \in H_{unif}^{-1}(\mathbb{R})$ can be represented (not uniquely) in the form
\begin{equation}\label{eq_111}
q = Q' + \tau,
\end{equation}
where derivative is understood in the sense of distributions and $Q$ and $\tau$ belong to Stepanov spaces 
$L_{unif}^2(\mathbb{R})$ and $L_{unif}^1(\mathbb{R})$ respectively, i. e.
\begin{align*}
\|Q\|_{L_{unif}^2(\mathbb{R})}^2 &:= \sup\limits_{t\in\mathbb{R}}\int\limits_{t}^{t+1}|Q(s)|^2ds < \infty,\\
\|\tau\|_{L_{unif}^1(\mathbb{R})} &:= \sup\limits_{t\in\mathbb{R}}\int\limits_{t}^{t+1}|\tau(s)|ds < \infty.
\end{align*}
Given such a representation, the operator $\mathrm{S}$ is defined as
\begin{equation}\label{eq_112}
\mathrm{S}(q)y = - (y' - Qy) ' - Qy' +\tau y
\end{equation}
on the domain \eqref{eq_11}.
This definition also does not depend on the particular choice of $Q$ and $\tau$ above.
Theorem~3.5 of the paper \cite{HrMk2001} claims that for real-valued $q \in H_{unif}^{-1}(\mathbb{R})$
the operator $\mathrm{S}(q)$ as defined by \eqref{eq_112} and \eqref{eq_11} is self-adjoint
and coincides with the operator $\mathrm{S}_{fs}(q)$ constructed by the form-sum method.
However the proof given in \cite{HrMk2001} is incomplete.

The fact that $\mathrm{S}(q)$ is indeed self-adjoint is rigorously justified in the paper \cite{MiMl6} for the particular
case where $q \in H_{unif}^{-1}(\mathbb{R})$ is periodic.
The authors prove therein that $\mathrm{S}_0(q)$, $\mathrm{S}(q)$, $\mathrm{S}_{fs}(q)$
and the Friedrichs extension $\mathrm{S}_{F}(q)$ of $\mathrm{S}_0(q)$ all coincide.
However the arguments heavily use periodicity of $q$ and can not be applied to generic real-valued
$q \in H_{unif}^{-1}(\mathbb{R})$.
This gap in the proof of Theorem 3.5 of \cite{HrMk2001} is filled in by the authors in their recent paper \cite{HrMk2012},
see also \cite{KsMl2010}.

This paper deals with the case when the potential $q \in H_{unif}^{-1}(\mathbb{R})$ is complex-valued.
One can easily see that in this case all operators $\mathrm{S}_0(q)$, $\mathrm{S}(q)$, $\mathrm{S}_{fs}(q)$ and
$\mathrm{S}_{F}(q)$ are well-defined and are related by
\begin{equation*}\label{eq_21}
\mathrm{S}_{0}(q)\subset\mathrm{S}_{F}(q)=\mathrm{S}_{fs}(q)\subset\mathrm{S}(q),\quad
\mathrm{Dom}(\mathrm{S}_{F}(q))\subset H^{1}(\mathbb{R}),\;
\mathrm{Dom}(\mathrm{S}(q))\subset H_{loc}^{1}(\mathbb{R})\cap L^2(\mathbb{R}).
\end{equation*}
The main purpose of this paper is to prove that these operators coincide and to investigate their approximation and spectral
properties. Let us state the main results.

\begin{theoremMain}\label{thMn_A}
For every function $q \in H_{unif}^{-1}(\mathbb{R})$ operators $\mathrm{S}_{0}(q)$, $\mathrm{S}(q)$,  $\mathrm{S}_{fs}(q)$
and $\mathrm{S}_{F}(q)$ are $m$-sectorial and coincide.
\end{theoremMain}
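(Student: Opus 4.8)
The plan is to reduce the statement to three points: a global ``relative bound zero'' estimate for the perturbations $Q$ and $\tau$; closedness and sectoriality of the natural sesquilinear form, which exhibits $\mathrm{S}_{fs}(q)=\mathrm{S}_{F}(q)$ as an $m$-sectorial operator via Kato's first representation theorem; and an a priori $H^{1}(\mathbb{R})$-bound on $\mathrm{Dom}(\mathrm{S}(q))$, which forces $\mathrm{S}(q)=\mathrm{S}_{0}(q)$ and thereby collapses the chain $\mathrm{S}_{0}(q)\subset\mathrm{S}_{F}(q)=\mathrm{S}_{fs}(q)\subset\mathrm{S}(q)$.

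First I fix a representation $q=Q'+\tau$ with $Q\in L_{unif}^{2}(\mathbb{R})$ and $\tau\in L_{unif}^{1}(\mathbb{R})$. Covering $\mathbb{R}$ by unit intervals, applying on each interval $I$ the embedding $H^{1}(I)\hookrightarrow C(\overline{I})$ in the quantitative form $\|w\|_{C(\overline{I})}^{2}\le\delta\|w'\|_{L^{2}(I)}^{2}+C_{\delta}\|w\|_{L^{2}(I)}^{2}$, and summing, one obtains (as in \cite{HrMk2001}) that for every $\varepsilon>0$ there is $C_{\varepsilon}$ with
\[
\|Qw\|_{L^{2}(\mathbb{R})}^{2}\le\varepsilon\|w'\|_{L^{2}(\mathbb{R})}^{2}+C_{\varepsilon}\|w\|_{L^{2}(\mathbb{R})}^{2},
\qquad
\int_{\mathbb{R}}|\tau|\,|w|^{2}\le\varepsilon\|w'\|_{L^{2}(\mathbb{R})}^{2}+C_{\varepsilon}\|w\|_{L^{2}(\mathbb{R})}^{2},
\qquad w\in H^{1}(\mathbb{R}),
\]
and likewise with $\mathbb{R}$ replaced by any bounded interval; it is precisely here that $q\in H_{unif}^{-1}(\mathbb{R})$, rather than merely $q\in H_{loc}^{-1}(\mathbb{R})$, is used. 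Consequently the form
\[
t[y,z]=\int_{\mathbb{R}}y'\overline{z'}-\int_{\mathbb{R}}Qy\,\overline{z'}-\int_{\mathbb{R}}Qy'\,\overline{z}+\int_{\mathbb{R}}\tau y\,\overline{z},
\qquad \mathrm{Dom}(t)=H^{1}(\mathbb{R}),
\]
which is exactly the form produced by the form-sum method, is densely defined, sectorial and closed: its ``free'' part $\int|y'|^{2}$ is closed with form domain $H^{1}(\mathbb{R})$, and the remaining terms are relatively form bounded with bound $0$. By Kato's first representation theorem $t$ is the form of an $m$-sectorial operator, which is the form-sum operator $\mathrm{S}_{fs}(q)=\mathrm{S}_{F}(q)$; in particular $-\mu\in\rho(\mathrm{S}_{fs}(q))$ for all sufficiently large $\mu>0$.

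The heart of the argument is the inclusion $\mathrm{Dom}(\mathrm{S}(q))\subseteq H^{1}(\mathbb{R})$. Let $y\in\mathrm{Dom}(\mathrm{S}(q))$, put $f:=l[y]\in L^{2}(\mathbb{R})$, fix $\chi\in C_{c}^{\infty}(\mathbb{R})$ with $\chi\equiv1$ near $0$ and set $\chi_{n}(x):=\chi(x/n)$, so that $\|\chi_{n}'\|_{\infty}\le Cn^{-1}$, $\|\chi_{n}''\|_{\infty}\le Cn^{-2}$ and $\chi_{n}^{2}\uparrow1$. Pairing $f$ with $\chi_{n}^{2}\overline{y}$ and integrating by parts (legitimate since $y,\,y'-Qy\in\mathrm{AC}_{loc}(\mathbb{R})$ and $\chi_{n}^{2}y$ has compact support) gives, after taking real parts,
\[
B_{n}:=\int_{\mathbb{R}}\chi_{n}^{2}|y'|^{2}
=\operatorname{Re}\int_{\mathbb{R}}f\,\overline{\chi_{n}^{2}y}
-2\operatorname{Re}\int_{\mathbb{R}}\chi_{n}\chi_{n}'(y'-Qy)\overline{y}
+\operatorname{Re}\int_{\mathbb{R}}\chi_{n}^{2}Q\bigl(|y|^{2}\bigr)'
-\operatorname{Re}\int_{\mathbb{R}}\tau\chi_{n}^{2}|y|^{2},
\]
where $B_{n}<\infty$ because $y'\in L_{loc}^{2}(\mathbb{R})$. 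The first term is at most $\|f\|\,\|y\|$; the other three are estimated by the inequalities of the previous paragraph applied to $\chi_{n}y\in H^{1}(\mathbb{R})$ together with $\|(\chi_{n}y)'\|^{2}\le 2B_{n}+Cn^{-2}\|y\|^{2}$, the terms carrying a factor $\chi_{n}'$ gaining an extra $n^{-1}$ from the support $\{n\le|x|\le 2n\}$; each of them is then bounded by $\tfrac14 B_{n}+C'\|y\|^{2}$. Hence $B_{n}\le 4\|f\|\,\|y\|+4C'\|y\|^{2}$ uniformly in $n$, and letting $n\to\infty$ by monotone convergence $y'\in L^{2}(\mathbb{R})$. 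I expect this absorption step — in particular verifying that the $\chi_{n}'$-terms involving $Q$ on the annulus $\{n\le|x|\le 2n\}$ are controlled without a circular dependence on $\int_{|x|\ge n}|y'|^{2}$ — to be the main technical obstacle.

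Finally, once $y\in H^{1}(\mathbb{R})$ the truncations $\chi_{n}y$ belong to $\mathrm{Dom}(\mathrm{S}_{00}(q))$ and the commutator identity $l[\chi_{n}y]=\chi_{n}l[y]-2\chi_{n}'y'-\chi_{n}''y$ shows $\chi_{n}y\to y$ and $l[\chi_{n}y]\to l[y]$ in $L^{2}(\mathbb{R})$, the convergence $\|\chi_{n}'y'\|\le Cn^{-1}\|y'\mathbf{1}_{\{|x|\ge n\}}\|\to0$ now using $y'\in L^{2}(\mathbb{R})$. Since $\mathrm{S}_{0}(q)$ is closed, $y\in\mathrm{Dom}(\mathrm{S}_{0}(q))$ with $\mathrm{S}_{0}(q)y=l[y]$; thus $\mathrm{S}(q)\subseteq\mathrm{S}_{0}(q)$, and together with $\mathrm{S}_{0}(q)\subset\mathrm{S}_{F}(q)=\mathrm{S}_{fs}(q)\subset\mathrm{S}(q)$ this shows that all four operators coincide. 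By the second paragraph the common operator is $m$-sectorial, which completes the proof.
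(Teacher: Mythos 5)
Your first and last steps match the paper: the relative form-bound~$0$ estimates of \cite[Lemma~3.2]{HrMk2001} give that $t=t_{0}+t_{q}$ is densely defined, closed and sectorial on $H^{1}(\mathbb{R})$, whence $\mathrm{S}_{F}(q)=\mathrm{S}_{fs}(q)$ is $m$-sectorial; and once $\mathrm{Dom}(\mathrm{S}(q))\subset H^{1}(\mathbb{R})$ is known, the chain $\mathrm{S}_{0}\subset\mathrm{S}_{F}=\mathrm{S}_{fs}\subset\mathrm{S}$ collapses (the paper does this via the vanishing of the boundary form $[u,v]_{-\infty}^{\infty}$ rather than by truncation, but both work). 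The problem is the middle step, and it is exactly the one you flag yourself. In your identity for $B_{n}=\int\chi_{n}^{2}|y'|^{2}$ the term $2\operatorname{Re}\int\chi_{n}\chi_{n}'\,Q\,|y|^{2}$, supported on the annulus $A_{n}=\operatorname{supp}\chi_{n}'$, cannot be bounded by $\tfrac14B_{n}+C'\lVert y\rVert^{2}$: any estimate of $\int_{I}|Q|\,|y|^{2}$ over a unit interval $I\subset A_{n}$ via the Sobolev/Stepanov inequalities costs a term $\delta\lVert y'\rVert_{L^{2}(I)}^{2}$, and on $A_{n}$ the weight $\chi_{n}^{2}$ is not bounded below, so $\lVert y'\rVert_{L^{2}(A_{n})}^{2}$ is \emph{not} controlled by $B_{n}$; nor is it controlled a priori by $\lVert f\rVert$ and $\lVert y\rVert$, since at this stage one only knows $y'\in L^{2}_{loc}$. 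The prefactor $n^{-1}$ does not rescue you, because the resulting recursion couples $B_{n}$ to $\int_{|x|\le 2n}|y'|^{2}$, for which no growth bound in $n$ is available. (The remaining terms are indeed absorbable: $-2\operatorname{Re}\int\chi_{n}\chi_{n}'y'\bar y=\int((\chi_{n}')^{2}+\chi_{n}\chi_{n}'')|y|^{2}$ after a further integration by parts, and $\int\chi_{n}^{2}Q(|y|^{2})'$ is handled through $\lVert Q\chi_{n}y\rVert\,B_{n}^{1/2}$.) So the central inclusion $\mathrm{Dom}(\mathrm{S}(q))\subset H^{1}(\mathbb{R})$ is not established.

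The paper avoids this obstruction entirely by going through the adjoint. After shifting so that $\operatorname{Re}(\mathrm{S}_{0}u,u)\ge\lVert u\rVert^{2}$ on $\mathrm{Dom}(\mathrm{S}_{0})$ (a bound proved on \emph{compactly supported} functions, where the form estimates apply without any cutoff issues), it shows $\ker\mathrm{S}^{+}(q)=\{0\}$: for $v$ with $\mathrm{l}^{+}[v]=0$ and a real cutoff $\varphi$, the quantity $(\mathrm{S}^{+}\varphi v,\varphi v)$ reduces to $\int(\varphi')^{2}|v|^{2}+\int\varphi\varphi'(v\bar v'-v'\bar v)$ --- all $Q$- and $\tau$-terms cancel, the second term is purely imaginary, and one gets $\int(\varphi')^{2}|v|^{2}\ge\int\varphi^{2}|v|^{2}$, an inequality involving only $|v|^{2}$ on annuli, which forces $v\equiv 0$ since $v\in L^{2}$. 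Hence $\mathrm{S}_{0}$ is quasi-$m$-accretive, so $\mathrm{S}_{0}=\mathrm{S}_{F}$, and passing to adjoints yields $\mathrm{Dom}(\mathrm{S}^{+})=\mathrm{Dom}(\mathrm{S}_{F}^{\ast})\subset H^{1}(\mathbb{R})$ and then $\mathrm{Dom}(\mathrm{S})\subset H^{1}(\mathbb{R})$ by complex conjugation. If you want to keep your direct route you must find a genuinely different way to handle the annulus term (or prove a preliminary growth bound on $\int_{|x|\le n}|y'|^{2}$); as written, the proof has a gap at its decisive point.
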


Theorem~\ref{thMn_A} allows one to link the known results for the Schr\"{o}dinger operators
in the space $L^2(\mathbb{R})$ which are defined in different ways, see e. g. \cite{Bra1985, Hrz1989, Tip1990, Krt2003,
DjMtOTAA2010}.

In the paper \cite{MiMl6} the authors proved that for every real-valued 1-periodic function
$q \in H_{loc}^{-1}(\mathbb{R})$ a sequence of \textit{smooth} 1-periodic functions $q_n$ exists
such that the sequence of operators $\mathrm{S}(q_n)$ converges to the operator $\mathrm{S}(q)$
in the sense of norm resolvent convergence. It is sufficient to establish
\[ \left\lVert q - q_n \right\rVert_{H^{-1}(0,1)}\rightarrow 0,\quad n\rightarrow\infty. \]
The following theorem generalizes this result in two directions. The potential $q$ may be complex-valued and non-periodic.

\begin{theoremMain}\label{thMn_B}
Let $q$, $q_{n}$, $n\geq 1$, belong to the space $H_{unif}^{-1}(\mathbb{R})$.
Then the sequence of operators $\mathrm{S}(q_{n})$, $n\geq 1$, converges to the operator $\mathrm{S}(q)$ 
in the sense of norm resolvent convergence, $\mathrm{R}(\lambda, \mathrm{S}):=(\mathrm{S}-\lambda\mathrm{Id})^{-1}$:
\begin{equation}\label{eq_20}
 \left\lVert \mathrm{R}(\lambda, \mathrm{S}(q))-\mathrm{R}(\lambda, \mathrm{S}(q_{n}))\right\rVert\rightarrow 0,\quad
 n\rightarrow\infty,\qquad \lambda \in \mathrm{Resolv}(\mathrm{S}(q)) \neq \emptyset,
\end{equation}
if
\begin{equation}\label{eq_22}
 q_{n}\overset{H_{unif}^{-1}(\mathbb{R})}{\longrightarrow} q,\quad n\rightarrow\infty
\end{equation}
or, equivalently,
\begin{equation}\label{eq_24}
 Q_{n}\overset{L_{unif}^{2}(\mathbb{R})}{\longrightarrow} Q,\quad
 \tau_{n}\overset{L_{unif}^{1}(\mathbb{R})}{\longrightarrow} \tau,\qquad n\rightarrow \infty.
\end{equation}
\end{theoremMain}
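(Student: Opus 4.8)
\emph{The plan.} I would reduce \eqref{eq_20} to norm resolvent convergence at a single, sufficiently negative real point $-\mu^{2}$ and there use the factorization of the resolvent through the free operator, following the pattern of the analysis behind Theorem~\ref{thMn_A}. First note that the equivalence of \eqref{eq_22} and \eqref{eq_24} is a property of the space $H_{unif}^{-1}(\mathbb{R})$ alone: its norm is equivalent to the infimum of $\|Q\|_{L_{unif}^{2}(\mathbb{R})}+\|\tau\|_{L_{unif}^{1}(\mathbb{R})}$ over all decompositions \eqref{eq_111}, and a representing pair may be chosen linearly and boundedly in $q$; so it suffices to prove the theorem assuming \eqref{eq_24}, and then $M:=\sup_{n}\bigl(\|Q_{n}\|_{L_{unif}^{2}(\mathbb{R})}+\|\tau_{n}\|_{L_{unif}^{1}(\mathbb{R})}\bigr)<\infty$. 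For $\mu\geq 1$ put $A_{\mu}:=\bigl(-d^{2}/dx^{2}+\mu^{2}\bigr)^{1/2}$; then $A_{\mu}^{-1}$ is a bounded bijection of $L^{2}(\mathbb{R})$ onto $H^{1}(\mathbb{R})$ with $\|(A_{\mu}^{-1}f)'\|_{L^{2}}\leq\|f\|_{L^{2}}$ and $\|A_{\mu}^{-1}f\|_{L^{2}}\leq\mu^{-1}\|f\|_{L^{2}}$. Let $\mathfrak{b}_{q}[u,v]:=-\int_{\mathbb{R}}Qu'\overline{v}-\int_{\mathbb{R}}Qu\,\overline{v'}+\int_{\mathbb{R}}\tau u\overline{v}$ be the potential part of the sesquilinear form of $\mathrm{S}(q)$ on $H^{1}(\mathbb{R})$; it is independent of the decomposition \eqref{eq_111} and linear in the pair $(Q,\tau)$. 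Decomposing $\mathbb{R}$ into unit intervals, using the Cauchy--Schwarz inequality on each together with the embedding $H^{1}(I)\hookrightarrow L^{\infty}(I)$ and summing, one gets an absolute constant $C_{0}$ with
\[
 \bigl|\mathfrak{b}_{q}[A_{\mu}^{-1}f,A_{\mu}^{-1}g]\bigr|\leq C_{0}\,\mu^{-1/2}\bigl(\|Q\|_{L_{unif}^{2}(\mathbb{R})}+\|\tau\|_{L_{unif}^{1}(\mathbb{R})}\bigr)\|f\|_{L^{2}}\|g\|_{L^{2}}\qquad(\mu\geq 1),
\]
the same estimate that underlies the $m$-sectoriality in Theorem~\ref{thMn_A}. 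Hence there is a bounded operator $W_{q}$ on $L^{2}(\mathbb{R})$ with $(W_{q}f,g)_{L^{2}}=\mathfrak{b}_{q}[A_{\mu}^{-1}f,A_{\mu}^{-1}g]$ and $\|W_{q}\|\leq C_{0}\mu^{-1/2}(\|Q\|_{L_{unif}^{2}}+\|\tau\|_{L_{unif}^{1}})$. Fix $\mu$ so large that this bound is $\leq\tfrac12$ both for $q$ and — using $M$ — for every $q_{n}$; then $\|W_{q}\|\leq\tfrac12$ and $\|W_{q_{n}}\|\leq\tfrac12$ for all $n$.

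\emph{Convergence at $-\mu^{2}$.} Since $\|W_{q}\|\leq\tfrac12$, the operator $\mathrm{Id}+W_{q}$ is boundedly invertible with $\|(\mathrm{Id}+W_{q})^{-1}\|\leq 2$; by the first representation theorem for closed sectorial forms, combined with $\mathrm{S}(q)=\mathrm{S}_{fs}(q)$ from Theorem~\ref{thMn_A}, it follows that $-\mu^{2}\in\mathrm{Resolv}(\mathrm{S}(q))$ and $\mathrm{R}(-\mu^{2},\mathrm{S}(q))=A_{\mu}^{-1}(\mathrm{Id}+W_{q})^{-1}A_{\mu}^{-1}$, and likewise for each $q_{n}$ in place of $q$. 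As $W_{q_{n}}-W_{q}=W_{q_{n}-q}$ (linearity of $\mathfrak{b}$ and $q_{n}-q=(Q_{n}-Q)'+(\tau_{n}-\tau)$), the displayed estimate applied to $q_{n}-q$ gives
\[
 \|W_{q_{n}}-W_{q}\|\leq C_{0}\mu^{-1/2}\bigl(\|Q_{n}-Q\|_{L_{unif}^{2}(\mathbb{R})}+\|\tau_{n}-\tau\|_{L_{unif}^{1}(\mathbb{R})}\bigr)\longrightarrow 0,\quad n\to\infty,
\]
by \eqref{eq_24}. The second resolvent identity then yields
\[
 \mathrm{R}(-\mu^{2},\mathrm{S}(q))-\mathrm{R}(-\mu^{2},\mathrm{S}(q_{n}))=A_{\mu}^{-1}(\mathrm{Id}+W_{q})^{-1}(W_{q_{n}}-W_{q})(\mathrm{Id}+W_{q_{n}})^{-1}A_{\mu}^{-1},
\]
whence $\bigl\|\mathrm{R}(-\mu^{2},\mathrm{S}(q))-\mathrm{R}(-\mu^{2},\mathrm{S}(q_{n}))\bigr\|\leq 4\|A_{\mu}^{-1}\|^{2}\,\|W_{q_{n}}-W_{q}\|\to 0$.

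\emph{Propagation and main obstacle.} Finally, norm resolvent convergence at the single point $-\mu^{2}$ extends to every $\lambda\in\mathrm{Resolv}(\mathrm{S}(q))$: the operator $\mathrm{Id}-(\lambda+\mu^{2})\mathrm{R}(-\mu^{2},\mathrm{S}(q))$ is invertible with inverse $\mathrm{Id}+(\lambda+\mu^{2})\mathrm{R}(\lambda,\mathrm{S}(q))$, so by the convergence just proved the same operator with $q_{n}$ in place of $q$ is, for all large $n$, invertible with uniformly bounded inverse; hence $\lambda\in\mathrm{Resolv}(\mathrm{S}(q_{n}))$, $\mathrm{R}(\lambda,\mathrm{S}(q_{n}))$ is recovered from $\mathrm{R}(-\mu^{2},\mathrm{S}(q_{n}))$ through this inverse, and \eqref{eq_20} follows upon passing to the limit. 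This completes the proof. The one genuinely new point is the \emph{uniformity in $n$} of the bounds on $(\mathrm{Id}+W_{q_{n}})^{-1}$; it is secured by choosing a single spectral parameter $-\mu^{2}$ far enough to the left that the relative form bounds of all the perturbations $\mathfrak{b}_{q_{n}}$ are simultaneously at most $\tfrac12$, which is possible precisely because \eqref{eq_24} keeps the norms $\|Q_{n}\|_{L_{unif}^{2}(\mathbb{R})}$ and $\|\tau_{n}\|_{L_{unif}^{1}(\mathbb{R})}$ bounded. The interval-decomposition estimate for $\mathfrak{b}_{q}$, the mapping properties of $A_{\mu}^{-1}$, and the propagation step are routine once Theorem~\ref{thMn_A} is available.
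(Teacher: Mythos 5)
Your argument is correct, but it follows a genuinely different route from the paper's. The paper closes the forms of $\mathrm{S}(q)$ and $\mathrm{S}(q_{n})$ on $H^{1}(\mathbb{R})$, uses Lemma~\ref{lm_Prf16} together with the coercivity bound \eqref{eq_Prf102} to obtain
\[
\left|t_{n}[u]-t_{0}[u]\right|\leq 2a_{n}\,\mathrm{Re}\,t_{0}[u]+8a_{n}\lVert u\rVert_{L^{2}(\mathbb{R})}^{2},
\qquad a_{n}:=2\left(\lVert Q-Q_{n}\rVert_{L_{unif}^{2}(\mathbb{R})}+\lVert \tau-\tau_{n}\rVert_{L_{unif}^{1}(\mathbb{R})}\right)\rightarrow 0,
\]
and then invokes Kato's theorem on convergence of sectorial forms \cite[Theorem~VI.3.6]{Kt1995}, which delivers generalized (hence norm resolvent) convergence in one stroke, with no need to single out a spectral point or to propagate. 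You instead run a Konno--Kuroda-type factorization $\mathrm{R}(-\mu^{2},\mathrm{S}(q))=A_{\mu}^{-1}(\mathrm{Id}+W_{q})^{-1}A_{\mu}^{-1}$ through the free operator, prove $\lVert W_{q_{n}}-W_{q}\rVert\rightarrow 0$ from the same interval-decomposition estimates, and propagate from $-\mu^{2}$ to arbitrary $\lambda\in\mathrm{Resolv}(\mathrm{S}(q))$ by hand. What your approach buys is self-containedness (you effectively reprove the special case of Kato's theorem that is needed, with an explicit quantitative bound $4\lVert A_{\mu}^{-1}\rVert^{2}\lVert W_{q_{n}}-W_{q}\rVert$ at the reference point) and an explicit verification that $\lambda\in\mathrm{Resolv}(\mathrm{S}(q_{n}))$ for large $n$; what the paper's approach buys is brevity, since all of your ``propagation'' and ``uniformity in $n$'' bookkeeping is packaged inside the cited theorem. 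Two small points to tighten: the claim that a representing pair $(Q,\tau)$ can be chosen ``linearly and boundedly'' in $q$ deserves a citation to \cite[Theorem~2.1]{HrMk2001} (the paper relies on the same fact for the equivalence of \eqref{eq_22} and \eqref{eq_24}), and your identification of $A_{\mu}^{-1}(\mathrm{Id}+W_{q})^{-1}A_{\mu}^{-1}$ with the resolvent of $\mathrm{S}(q)$ does use $\mathrm{S}(q)=\mathrm{S}_{fs}(q)$, i.e.\ Theorem~\ref{thMn_A}, exactly as you say --- the paper's proof depends on Theorem~\ref{thMn_A} in the same way, so this is not a defect.
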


Since the set $C^{\infty}(\mathbb{R})\cap L_{unif}^{1}(\mathbb{R})$ is dense in the space
$H_{unif}^{-1}(\mathbb{R})$ (see Section~\ref{ssec:PrfB} below), then the following corollary holds.
\begin{corollaryThMn}\label{cr_thMnB_B1}
For every function $q \in H_{unif}^{-1}(\mathbb{R})$  there is a sequence of functions $q_n \in C^{\infty}(\mathbb{R})\cap
L_{unif}^{1}(\mathbb{R})$ such that the limit relation \eqref{eq_20} is true. If the function $q$ is real-valued, then the
functions $q_n$ can be chosen to be real-valued as well.
\end{corollaryThMn}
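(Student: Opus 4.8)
The plan is to obtain the corollary in one step from Theorem~\ref{thMn_B}. By Theorem~\ref{thMn_A} the operator $\mathrm{S}(q)$ is $m$-sectorial, hence $\mathrm{Resolv}(\mathrm{S}(q))\neq\emptyset$ and \eqref{eq_20} is meaningful; and if $q_{n}\to q$ in $H_{unif}^{-1}(\mathbb{R})$ with each $q_{n}\in C^{\infty}(\mathbb{R})\cap L_{unif}^{1}(\mathbb{R})$, then \eqref{eq_22} holds for that sequence and Theorem~\ref{thMn_B} delivers \eqref{eq_20} verbatim. So the entire content reduces to the density of $C^{\infty}(\mathbb{R})\cap L_{unif}^{1}(\mathbb{R})$ in $H_{unif}^{-1}(\mathbb{R})$ --- the statement quoted just above and carried out in Section~\ref{ssec:PrfB}.

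For that density the natural route is regularization. Fixing an admissible splitting $q=Q'+\tau$ with $Q\in L_{unif}^{2}(\mathbb{R})$, $\tau\in L_{unif}^{1}(\mathbb{R})$ and a mollifier $\omega_{n}(x)=n\,\omega(nx)$ ($\omega\in C_{c}^{\infty}(\mathbb{R})$, $\omega\ge 0$, $\int\omega=1$), one sets $q_{n}:=(Q*\omega_{n})'+\tau*\omega_{n}=Q*\omega_{n}'+\tau*\omega_{n}$. Each $q_{n}$ is smooth, and a Young-type bound for Stepanov norms (immediate since $\omega_{n},\omega_{n}'$ are compactly supported) gives $Q*\omega_{n}'\in L_{unif}^{2}(\mathbb{R})\subset L_{unif}^{1}(\mathbb{R})$ and $\tau*\omega_{n}\in L_{unif}^{1}(\mathbb{R})$, so $q_{n}\in C^{\infty}(\mathbb{R})\cap L_{unif}^{1}(\mathbb{R})$. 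By the equivalence \eqref{eq_22}$\Leftrightarrow$\eqref{eq_24} it then remains to verify $Q*\omega_{n}\to Q$ in $L_{unif}^{2}(\mathbb{R})$ and $\tau*\omega_{n}\to\tau$ in $L_{unif}^{1}(\mathbb{R})$, which one would reduce, via Minkowski's integral inequality applied on the unit windows $[t,t+1]$, to uniform-in-$t$ smallness of the $L^{p}$-moduli of continuity of $Q$ and $\tau$ on those windows.

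This last reduction is the delicate point and the real content of Section~\ref{ssec:PrfB}: on the Stepanov spaces $L_{unif}^{p}(\mathbb{R})$ translation need not act strongly continuously, so convergence of the regularizations in the $H_{unif}^{-1}(\mathbb{R})$-norm is not automatic and needs a careful choice and estimate of the smoothing (possibly a location-dependent one, or an additional truncation, to keep the two requirements --- smallness of the error in $H_{unif}^{-1}(\mathbb{R})$ and membership in $L_{unif}^{1}(\mathbb{R})$ --- compatible). Granting it, one picks $n$ so large that $\|Q-Q*\omega_{n}\|_{L_{unif}^{2}(\mathbb{R})}$ and $\|\tau-\tau*\omega_{n}\|_{L_{unif}^{1}(\mathbb{R})}$ are as small as desired; then \eqref{eq_24} holds, hence \eqref{eq_22}, hence \eqref{eq_20} by Theorem~\ref{thMn_B}. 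Finally, when $q$ is real-valued one may take $Q$ and $\tau$ real-valued, and since $\omega$ is real the functions $q_{n}$ are then real-valued, which gives the last assertion of the corollary.
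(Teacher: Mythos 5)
Your reduction of the corollary to Theorem~\ref{thMn_B} plus the density of $C^{\infty}(\mathbb{R})\cap L_{unif}^{1}(\mathbb{R})$ in $H_{unif}^{-1}(\mathbb{R})$ is exactly the paper's reduction, and the closing remark about real-valued $q$ is fine. The gap is in the density itself: you propose the mollifications $Q*\omega_{n}$, $\tau*\omega_{n}$ and then concede the key convergence with ``granting it''. That step is not merely delicate --- it fails for general elements of the Stepanov spaces. One has $\lVert f*\omega_{n}-f\rVert_{L_{unif}^{p}(\mathbb{R})}\to 0$ essentially only for $f$ in the proper closed subspace of $L_{unif}^{p}(\mathbb{R})$ on which translation acts strongly continuously; a function assembled from pieces on $[k,k+1]$ that oscillate with frequency tending to infinity as $|k|\to\infty$ belongs to $L_{unif}^{p}(\mathbb{R})$, yet for every $h>0$ there is a unit window on which translation by $h$ changes it by an amount of order one, so no fixed global mollifier gives convergence in the Stepanov norm. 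Hence the convolution route cannot be completed as written, and the corollary is not proved.

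The paper circumvents translations entirely (Theorem~\ref{th_StpSp10}): cut $f$ into $f_{k}=\chi_{[k,k+1)}f$, approximate each piece in $L^{p}(k,k+1)$ by some $g_{k}\in C_{0}^{\infty}(k,k+1)$ with error $\varepsilon 2^{-|k|-2}$, and glue $g_{\varepsilon}=\sum_{k}g_{k}$; then $g_{\varepsilon}\in C^{\infty}(\mathbb{R})\cap L_{unif}^{p}(\mathbb{R})$ and $\lVert f-g_{\varepsilon}\rVert_{L_{unif}^{p}(\mathbb{R})}\leq\lVert f-g_{\varepsilon}\rVert_{L^{p}(\mathbb{R})}<\varepsilon$. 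Applying this to $Q$ and $\tau$ (or, as in Corollary~\ref{cr_StpSp10}, combining it with the density of $L_{unif}^{1}(\mathbb{R})$ in $H_{unif}^{-1}(\mathbb{R})$ from \cite[Theorem~2.1]{HrMk2001}) yields \eqref{eq_24}, hence \eqref{eq_20} by Theorem~\ref{thMn_B}. If you wish to keep a convolution flavour you would have to use a location-dependent smoothing scale on each unit interval, which is this local construction in disguise; as it stands, your argument establishes the corollary only for potentials whose primitives happen to be translation-continuous in the Stepanov norm (e.g.\ the almost periodic case), not for all of $H_{unif}^{-1}(\mathbb{R})$.
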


In particular, if $Q$ and $\tau$ are almost periodic Stepanov functions
then $Q_n$ and $\tau_n$ can be chosen to be trigonometrical polynomials \cite[Theorem~I.5.7.2]{Lvt1953}.
If $Q$ and $\tau$ are are bounded and uniformly continuous on the whole real axis $\mathbb{R}$,
then $Q_{n}$ and $\tau_{n}$ can be chosen to be entire analytic functions
\cite[Theorem~I.1.10.1, Remark]{Lvt1953}.

The following theorem allows one to describe the localization of the spectrum of the operators
$\mathrm{S}(q)$.
\begin{theoremMain}\label{thMn_C}
The numerical ranges of operators
$\mathrm{S}(q)$ (and therefore their spectra) lie within the parabola:
\begin{align}
 &\left\lvert \mathrm{Im}\,\lambda\right\rvert  \leq 5K\left(\mathrm{Re}\,\lambda+4(2K+1)^{4}\right)^{3/4}, \label{eq_26} \\
 & K =2\left(\lVert Q\rVert_{L_{unif}^{2}(\mathbb{R})} \notag
 +\lVert\tau\rVert_{L_{unif}^{1}(\mathbb{R})}\right).
\end{align}

If the potential $q$ is real-valued, then the self-adjoint operator $\mathrm{S}(q)$
is bounded below by a number
\[ m(K) = \begin{cases}
  -4K,\quad & \text{if}\quad K \in [0,1/2), \\
  -32K^{4},\quad & \text{if}\quad K\geq 1/2.
 \end{cases}\]
\end{theoremMain}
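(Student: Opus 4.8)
The plan is to extract the parabolic enclosure of the numerical range directly from a quadratic‐form estimate for $\mathrm{S}(q)=\mathrm{S}_{fs}(q)$, using the decomposition $q=Q'+\tau$ with $Q\in L^2_{unif}(\mathbb{R})$, $\tau\in L^1_{unif}(\mathbb{R})$. For $y\in\mathrm{Dom}(\mathrm{S}(q))\subset H^1(\mathbb{R})$ with $\|y\|_{L^2}=1$ one has, after integration by parts,
\[
 (\mathrm{S}(q)y,y)=\|y'\|_{L^2}^2-\int_{\mathbb{R}}Q\bigl(\overline{y}'y+\overline{y}y'\bigr)\,dx+\int_{\mathbb{R}}\tau|y|^2\,dx .
\]
The first step is to bound the two perturbation integrals by the free part. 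The key analytic tool is a local Sobolev (Agmon‑type) interpolation inequality valid uniformly on unit intervals: for every $\varepsilon>0$,
\[
 \|u\|_{L^\infty(t,t+1)}^2\le \varepsilon\|u'\|_{L^2(t,t+1)}^2+C_\varepsilon\|u\|_{L^2(t,t+1)}^2 ,
\]
which, combined with a covering of $\mathbb{R}$ by unit intervals and the Cauchy–Schwarz inequality, yields
\[
 \Bigl|\int_{\mathbb{R}}\tau|y|^2\Bigr|\le \|\tau\|_{L^1_{unif}}\bigl(\varepsilon\|y'\|_{L^2}^2+C_\varepsilon\bigr),\qquad
 \Bigl|\int_{\mathbb{R}}Q(\overline{y}'y+\overline{y}y')\Bigr|\le 2\|Q\|_{L^2_{unif}}\|y'\|_{L^2}\bigl(\varepsilon\|y'\|_{L^2}^2+C_\varepsilon\bigr)^{1/2}.
\]
I would then introduce $s:=\|y'\|_{L^2}^2\ge 0$ and track how $\mathrm{Re}(\mathrm{S}(q)y,y)$ is bounded below by roughly $s-K(\ldots)$ and $|\mathrm{Im}(\mathrm{S}(q)y,y)|$ is bounded above by a constant times $K\,s^{1/4}$ plus lower order, choosing $\varepsilon$ (and hence $C_\varepsilon$) of order a fixed constant; optimizing the numerical constants so as to land on exactly $5K(\mathrm{Re}\,\lambda+4(2K+1)^4)^{3/4}$ and the stated $K=2(\|Q\|_{L^2_{unif}}+\|\tau\|_{L^1_{unif}})$ is the bookkeeping that produces the displayed form \eqref{eq_26}. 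Eliminating the parameter $s$ between the lower bound on $\mathrm{Re}\,\lambda$ and the upper bound on $|\mathrm{Im}\,\lambda|$ is exactly the algebraic step that turns the half‑plane/region $\{\mathrm{Re}\ge s-\mathrm{const},\ |\mathrm{Im}|\le \mathrm{const}\cdot s^{1/4}\}$ into the parabola‑type curve with exponent $3/4$.

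For the last assertion, when $q$ is real‑valued all the integrals above are real, so $\mathrm{S}(q)=\mathrm{S}_{fs}(q)$ is self‑adjoint (this is already contained in Theorem~A), and the lower bound is obtained from the same form estimate with $s$ chosen to minimize the right‑hand side: $\mathrm{Re}(\mathrm{S}(q)y,y)\ge s-2\|Q\|_{L^2_{unif}}s^{1/2}(\varepsilon s+C_\varepsilon)^{1/2}-\|\tau\|_{L^1_{unif}}(\varepsilon s+C_\varepsilon)$. A direct minimization over $s\ge 0$ splits into the regime $K<1/2$, where the minimizing $s$ is $0$ and one gets $-4K$, and the regime $K\ge 1/2$, where the minimizer is of order $K^2$ and the bound becomes $-32K^4$; this reproduces $m(K)$ exactly.

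The main obstacle is not the existence of \emph{some} parabolic enclosure — that follows easily from $m$‑sectoriality in Theorem~A — but getting the \emph{explicit} constants $5K$, $4(2K+1)^4$, $-4K$, $-32K^4$ to come out of the estimates. That forces one to (i) use a sufficiently sharp version of the interpolation inequality with an explicit, optimal dependence of $C_\varepsilon$ on $\varepsilon$ (for the unit interval, $C_\varepsilon\sim 1/\varepsilon+1$ with controllable absolute constants), and (ii) be careful in combining the $Q$‑ and $\tau$‑terms, since naively estimating $\|y'\|_{L^2}\cdot(\varepsilon\|y'\|_{L^2}^2+C_\varepsilon)^{1/2}$ by Young's inequality wastes constants; instead I would keep $s^{1/2}(\varepsilon s+C_\varepsilon)^{1/2}\le \varepsilon^{1/2} s + (C_\varepsilon s)^{1/2}$ and handle the resulting $s^{3/4}$‑type terms directly. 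Once the two scalar inequalities
\[
 \mathrm{Re}\,\lambda\ \ge\ s-\tfrac{K}{2}\bigl(\text{stuff}\bigr),\qquad |\mathrm{Im}\,\lambda|\ \le\ \tfrac{5K}{?}\bigl(\text{stuff}\bigr)
\]
are in hand with the right constants, the passage to \eqref{eq_26} and to $m(K)$ is elementary calculus.
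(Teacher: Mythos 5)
Your proposal follows essentially the same route as the paper: the perturbation form is bounded by $K\varepsilon\lVert u'\rVert^{2}+4K\varepsilon^{-3}\lVert u\rVert^{2}$ (the paper invokes Lemma~3.2 of Hryniv--Mykytyuk, which is exactly the localized interpolation argument you describe), and the exponent $3/4$ parabola arises by optimizing over $\varepsilon$ --- the paper packages that elimination step as an abstract lemma computing the envelope of the lines $y=2a\varepsilon x+2b\varepsilon^{-s}$ with $a=K$, $b=4K$, $s=3$, whereas you eliminate $s=\lVert y'\rVert^{2}$ directly, and the real-valued lower bound is obtained in the paper exactly as you indicate, by taking $\varepsilon=\min\{1,(2K)^{-1}\}$ in the same estimate. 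Two small points: your intermediate claim $\lvert\mathrm{Im}\,(\mathrm{S}(q)y,y)\rvert\lesssim K s^{1/4}$ should read $Ks^{3/4}$ (consistent with \eqref{eq_26} and with your own later reference to ``$s^{3/4}$-type terms''), and the shift $4(2K+1)^{4}$ comes from the constraint $\varepsilon\leq(2K+1)^{-1}$ needed to keep $\mathrm{Re}$ of the form bounded below by $\tfrac12\lVert u'\rVert^{2}-4K\varepsilon^{-3}\lVert u\rVert^{2}$, a detail your bookkeeping would need to make explicit.
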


Note that if a complex-valued potential $q \in H_{unif}^{-1}(\mathbb{R})$ is a periodic generalized function, then the
spectrum of the operator $\mathrm{S}(q)$ lies within a quadratic parabola \cite[Theorem~6]{MiMlRNANU2006}.
A similar result holds for certain complex-valued measures, see \cite{Tip1990} and Section~\ref{ssec:PrfC}, 
formula~\eqref{eq_Tip}.

Similar problems are considered in the papers 
\cite{BrNz2013, EcGsNcTs2012, Glv2012, MiMl5, NmSh1999, Schm2012}. 


\section{Preliminaries}\label{sec:Prl}
This section contains several statements that are used in the proof of Theorem~\ref{thMn_A}.

We begin with introduction the dual operators $\mathrm{S}_{00}^{+}(q)$ и $\mathrm{S}^{+}(q)$. 

The formally adjoint quasi-differential expression $\mathrm{l}^{+}$ for $\mathrm{l}$ is defined by 
\cite{Ztt1975}:
\begin{align*}
 v^{\{0\}} & := v,\quad v^{\{1\}}:=v'-\overline{Q}v,\quad v^{\{2\}}:=(v^{\{1\}})'+\overline{Q}
 v^{\{1\}}+(\overline{Q^{2}}-\overline{\tau})v, \\
 \mathrm{l}^{+}[v] & :=-v^{\{2\}},\qquad \mathrm{Dom}(\mathrm{l}^{+}):=\left\lbrace
 v:\mathbb{R}\rightarrow\mathbb{C}\;\left|\; v,v^{\{1\}}\in \mathrm{AC}_{loc}(\mathbb{R})\right. \right\rbrace.
\end{align*}
By $\overline{\mbox{}\cdot\mbox{}}$ we denote a complex conjugation.

Then
\begin{align*}
 \mathrm{S}^{+}v\equiv \mathrm{S}^{+}(q)v & :=\mathrm{l}^{+}[v],\hspace{5pt}
   \mathrm{Dom}(\mathrm{S}^{+}):=\left\{v\in L^{2}(\mathbb{R})\,\left|\,v,\,v^{\{1\}}\in
 \mathrm{AC}_{loc}(\mathbb{R}),\mathrm{l}^{+}[v]\in L^{2}(\mathbb{R})\right.\right\}, \\
  \mathrm{S}_{00}^{+}v\equiv\mathrm{S}_{00}^{+}(q)v & :=\mathrm{l}^{+}[v], \hspace{5pt}
   \mathrm{Dom}(\mathrm{S}_{00}^{+}) :=\left\lbrace v\in \mathrm{Dom}(\mathrm{S}^{+})\;
  \left|\;\mathrm{supp}\,v\Subset\mathbb{R}\right.\right\rbrace.
\end{align*}

One can easily see that if $\mathrm{Im}\,q\equiv 0$ then operators 
$\mathrm{S}_{00}(q)$ и $\mathrm{S}_{00}^{+}(q)$, $\mathrm{S}(q)$ and $\mathrm{S}^{+}(q)$ coincide.

\begin{lemma}[Theorem~1, Corollary~1 \cite{Ztt1975}]\label{lm_Prf10}
For arbitrary functions $u\in \mathrm{Dom}(\mathrm{S})$, $v\in \mathrm{Dom}(\mathrm{S}^{+})$ 
and finite interval $[a,b]$ the following equality holds: 
\begin{equation}\label{eq_Prf10}
  \int_{a}^{b}l[u]\overline{v}d\,x-\int_{a}^{b}u\overline{l^{+}[v]}d\,x=[u,v]_{a}^{b},
\end{equation}
where
  \begin{align*}
  [u,v](t) & :=u(t)\overline{v^{\{1\}}(t)}-u^{[1]}(t)\overline{v(t)}, \\
  [u,v]_{a}^{b} & :=[u,v](b)-[u,v](a).
\end{align*}
\end{lemma}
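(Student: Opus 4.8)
\emph{Proof proposal.} Identity~\eqref{eq_Prf10} is the classical Lagrange identity (Green's formula) for the formally adjoint pair of Shin--Zettl quasi-differential expressions $(\mathrm{l},\mathrm{l}^{+})$, and the natural route is direct integration by parts on $[a,b]$, carefully tracking the quasi-derivatives $u^{[1]}=u'-Qu$ and $v^{\{1\}}=v'-\overline{Q}v$. The only genuine subtlety is that $Q\in L^{2}_{loc}(\mathbb{R})$, so $u'$ and $v'$ need not themselves be locally absolutely continuous; by the definitions of $\mathrm{Dom}(\mathrm{S})$ and $\mathrm{Dom}(\mathrm{S}^{+})$ only $u,\,u^{[1]},\,v,\,v^{\{1\}}$ (hence also $\overline{v},\,\overline{v^{\{1\}}}$) are. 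Accordingly, every integration by parts must be arranged so that only a genuinely $\mathrm{AC}_{loc}$-function gets differentiated.

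First I would write $\mathrm{l}$ in quasi-derivative form, $l[u]=-(u^{[1]})'-Qu^{[1]}-(Q^{2}-\tau)u$ (one checks this equals $-(u'-Qu)'-Qu'+\tau u$, cf.~\eqref{eq_112}), and integrate the single differentiated term $(u^{[1]})'$ by parts against $\overline{v}$; since $u^{[1]},\overline{v}\in\mathrm{AC}_{loc}$ this is legitimate and yields $\int_{a}^{b}u^{[1]}\overline{v}{}'\,dx$ together with the boundary term $-\bigl[u^{[1]}\overline{v}\bigr]_{a}^{b}$. Substituting $\overline{v}{}'=\overline{v^{\{1\}}}+Q\overline{v}$ makes the term $\int_{a}^{b}Qu^{[1]}\overline{v}\,dx$ cancel and leaves
\begin{equation*}
 \int_{a}^{b}l[u]\overline{v}\,dx=-\bigl[u^{[1]}\overline{v}\bigr]_{a}^{b}+\int_{a}^{b}u^{[1]}\overline{v^{\{1\}}}\,dx-\int_{a}^{b}(Q^{2}-\tau)u\overline{v}\,dx .
\end{equation*}
Next I would expand $u^{[1]}=u'-Qu$ in the remaining integral and integrate the piece $\int_{a}^{b}u'\overline{v^{\{1\}}}\,dx$ by parts; this is again legitimate since $u,\overline{v^{\{1\}}}\in\mathrm{AC}_{loc}$. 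Reading $(\overline{v^{\{1\}}})'=-\overline{l^{+}[v]}-Q\overline{v^{\{1\}}}-(Q^{2}-\tau)\overline{v}$ off the definition of $v^{\{2\}}$ and $\mathrm{l}^{+}[v]=-v^{\{2\}}$, the term $\int_{a}^{b}Qu\overline{v^{\{1\}}}\,dx$ cancels and one gets
\begin{equation*}
 \int_{a}^{b}u^{[1]}\overline{v^{\{1\}}}\,dx=\bigl[u\overline{v^{\{1\}}}\bigr]_{a}^{b}+\int_{a}^{b}u\,\overline{l^{+}[v]}\,dx+\int_{a}^{b}(Q^{2}-\tau)u\overline{v}\,dx .
\end{equation*}
Combining the two displays, the $\int_{a}^{b}(Q^{2}-\tau)u\overline{v}\,dx$ terms cancel and the boundary contributions collapse precisely to $[u,v]_{a}^{b}=\bigl[u\overline{v^{\{1\}}}-u^{[1]}\overline{v}\bigr]_{a}^{b}$, which is~\eqref{eq_Prf10}.

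The main (and essentially only) difficulty is exactly this bookkeeping forced by the low regularity of $Q$: one must never integrate by parts in a step that differentiates $u'$ or $v'$ in isolation, and one must check that every product occurring under an integral sign lies in $L^{1}_{loc}$ — which holds since $Q\in L^{2}_{loc}\subset L^{1}_{loc}$, $\tau\in L^{1}_{loc}$, and the remaining factors are continuous on $[a,b]$. Alternatively one can bypass the computation altogether: the matrices of $\mathrm{l}$ and $\mathrm{l}^{+}$ form a formally adjoint Shin--Zettl pair, so~\eqref{eq_Prf10} is a special case of the general Lagrange identity, which is precisely Theorem~1 and Corollary~1 of~\cite{Ztt1975}.
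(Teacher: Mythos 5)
Your computation is correct: writing $l[u]=-(u^{[1]})'-Qu^{[1]}-(Q^{2}-\tau)u$ with $u^{[1]}=u'-Qu$ indeed reproduces \eqref{eq_112}, the conjugate of the defining relation for $v^{\{2\}}$ gives $(\overline{v^{\{1\}}})'=-\overline{l^{+}[v]}-Q\overline{v^{\{1\}}}-(Q^{2}-\tau)\overline{v}$, and the two integrations by parts (each differentiating only a function known to be in $\mathrm{AC}_{loc}$, namely $u^{[1]}$ against $\overline{v}$ and then $u$ against $\overline{v^{\{1\}}}$) produce exactly the cancellations you describe, leaving the boundary term $\bigl[u\overline{v^{\{1\}}}-u^{[1]}\overline{v}\bigr]_{a}^{b}=[u,v]_{a}^{b}$. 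The integrability checks are also the right ones: $Q^{2}\in L^{1}_{loc}$ since $Q\in L^{2}_{loc}$, and all other factors are continuous on $[a,b]$. The paper itself does not prove this lemma at all --- it is quoted verbatim from Zettl's general theory of formally adjoint Shin--Zettl pairs (Theorem~1 and Corollary~1 of \cite{Ztt1975}), which is the route you mention in your closing sentence. So your argument is a genuine, self-contained verification of the cited result rather than a reproduction of anything in the paper; what it buys is an explicit check that $(\mathrm{l},\mathrm{l}^{+})$ as defined here (including the $\tau$ term and the complex conjugations on $Q$ and $\tau$) really do form a formally adjoint pair with the stated bracket, which the paper leaves to the reader's trust in the reference. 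No gaps.
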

\begin{lemma}\label{lm_Prf12}
For arbitrary functions $u\in \mathrm{Dom}(\mathrm{S})$ and $v\in \mathrm{Dom}(\mathrm{S}^{+})$ 
the following limits exist and are finite:
\begin{equation*}\label{eq_Prf12}
  [u,v](-\infty):=\lim_{t\rightarrow-\infty}[u,v](t),\qquad [u,v](\infty):=\lim_{t\rightarrow\infty}[u,v](t).
\end{equation*}
\end{lemma}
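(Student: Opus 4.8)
The plan is to exploit the Green--Lagrange identity \eqref{eq_Prf10} of Lemma~\ref{lm_Prf10} together with the membership $u\in L^2(\mathbb{R})$, $l[u]\in L^2(\mathbb{R})$, $v\in L^2(\mathbb{R})$, $l^+[v]\in L^2(\mathbb{R})$, which forces the left-hand side of \eqref{eq_Prf10} to be a well-defined finite quantity as $a\to-\infty$ or $b\to\infty$. Indeed, by Cauchy--Schwarz,
\[
\left|\int_{a}^{b}l[u]\overline{v}\,dx\right|\leq \|l[u]\|_{L^2(\mathbb{R})}\,\|v\|_{L^2(\mathbb{R})}<\infty,
\]
uniformly in $a,b$, and similarly for $\int_a^b u\,\overline{l^+[v]}\,dx$; moreover both integrals converge (in the Lebesgue sense) as $a\to-\infty$, $b\to\infty$. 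Hence by \eqref{eq_Prf10} the boundary form $[u,v]_a^b=[u,v](b)-[u,v](a)$ converges to a finite limit as $b\to\infty$ with $a$ fixed, and likewise as $a\to-\infty$ with $b$ fixed. This already shows that $\lim_{t\to\pm\infty}[u,v](t)$ exist \emph{in the extended sense} of being Cauchy-along-the-integral, but it does not immediately give existence of the pointwise limit of $[u,v](t)$, because $[u,v]$ need not be monotone; this is the main obstacle.

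To upgrade convergence of the difference $[u,v](b)-[u,v](a)$ to convergence of $[u,v](t)$ itself, I would argue as follows. Fix $a=0$ and set $\phi(t):=[u,v](t)-[u,v](0)=\int_{0}^{t}\bigl(l[u]\overline{v}-u\,\overline{l^+[v]}\bigr)dx$ for $t>0$; this is an absolutely continuous function whose derivative $l[u]\overline{v}-u\,\overline{l^+[v]}$ lies in $L^1(\mathbb{R})$ by Cauchy--Schwarz (the product of two $L^2$ functions is $L^1$). A function on $[0,\infty)$ that is an indefinite integral of an $L^1(0,\infty)$ function has a finite limit at $+\infty$: writing $\phi(t)=\phi(0)+\int_0^t g(s)\,ds$ with $g\in L^1(0,\infty)$, the tail $\int_t^\infty g(s)\,ds\to 0$, so $\phi(t)$ converges. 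Therefore $[u,v](t)=\phi(t)+[u,v](0)$ has a finite limit as $t\to+\infty$, and the symmetric argument on $(-\infty,0]$ gives the finite limit as $t\to-\infty$.

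Thus the key input is precisely that $l[u]\overline v$ and $u\,\overline{l^+[v]}$ are each in $L^1(\mathbb{R})$, which follows from $u,l[u],v,l^+[v]\in L^2(\mathbb{R})$ — this is built into the definitions of $\mathrm{Dom}(\mathrm{S})$ in \eqref{eq_11} and $\mathrm{Dom}(\mathrm{S}^+)$. Everything else is the elementary fact that an $L^1$-integrable density produces a convergent indefinite integral at infinity. I expect no genuine difficulty beyond being careful that the finite-interval identity \eqref{eq_Prf10} is legitimately passed to the limit, which is immediate since its right-hand side $[u,v]_a^b$ is a difference of two quantities each of which we are proving converges, while its left-hand side converges by dominated convergence against the $L^1$ majorant.
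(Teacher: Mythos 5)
Your proof is correct and is essentially the paper's own argument: fix one endpoint in the Lagrange identity \eqref{eq_Prf10} and let the other tend to $\pm\infty$, using that the integrands $l[u]\overline{v}$ and $u\,\overline{l^{+}[v]}$ are products of $L^{2}$ functions and hence lie in $L^{1}(\mathbb{R})$. The ``obstacle'' you describe is illusory --- once $b$ is fixed, convergence of $[u,v](b)-[u,v](a)$ as $a\to-\infty$ is already convergence of $[u,v](a)$ itself --- though your indefinite-integral-of-an-$L^{1}$-density argument settles it correctly in any case.
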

\begin{proof}
Let us fix the number $b$ in the equality \eqref{eq_Prf10} and then pass to the limit as $a\rightarrow-\infty$.
Whereas due to the assumptions of the lemma $u,v,l[u],l^{+}[v]\in L^{2}(\mathbb{R})$, 
the limit $[u,v](-\infty)$ exists and is finite.
Similarly one can prove that the limit $[u,v](\infty)$ exists and is finite.

The Lemma is proved.
\end{proof}
\begin{lemma}[Generalized Lagrange identity]\label{lm_Prf14}
For all functions $u\in \mathrm{Dom}(\mathrm{S})$, $v\in \mathrm{Dom}(\mathrm{S}^{+})$ the equality 
\begin{align}
\int_{-\infty}^{\infty}l[u]\overline{v}d\,x-\int_{-\infty}^{\infty}u\overline{l^{+}[v]}d\,x &
=[u,v]_{-\infty}^{\infty},\label{eq_Prf14} \\
[u,v]_{-\infty}^{\infty} & :=[u,v](\infty)-[u,v](-\infty). \notag
\end{align}
holds. 
\end{lemma}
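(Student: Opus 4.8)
The plan is to derive the Generalized Lagrange identity \eqref{eq_Prf14} from the finite-interval version \eqref{eq_Prf10} of Lemma~\ref{lm_Prf10} by a limiting argument, using Lemma~\ref{lm_Prf12} to control the boundary terms. First I would take an arbitrary finite interval $[a,b]$ and write down \eqref{eq_Prf10} for our fixed $u\in\mathrm{Dom}(\mathrm{S})$, $v\in\mathrm{Dom}(\mathrm{S}^{+})$:
\[
  \int_{a}^{b}l[u]\overline{v}\,dx-\int_{a}^{b}u\,\overline{l^{+}[v]}\,dx=[u,v](b)-[u,v](a).
\]
Since $u,v\in\mathrm{Dom}(\mathrm{S})$, $\mathrm{Dom}(\mathrm{S}^{+})$ respectively, all four of $u$, $v$, $l[u]$, $l^{+}[v]$ lie in $L^{2}(\mathbb{R})$; hence the products $l[u]\overline{v}$ and $u\,\overline{l^{+}[v]}$ are in $L^{1}(\mathbb{R})$ by the Cauchy--Schwarz inequality, so the two integrals on the left converge absolutely as $a\to-\infty$ and $b\to+\infty$ to $\int_{-\infty}^{\infty}l[u]\overline{v}\,dx$ and $\int_{-\infty}^{\infty}u\,\overline{l^{+}[v]}\,dx$.

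Next I would handle the right-hand side: by Lemma~\ref{lm_Prf12} the limits $[u,v](-\infty)=\lim_{a\to-\infty}[u,v](a)$ and $[u,v](\infty)=\lim_{b\to+\infty}[u,v](b)$ both exist and are finite. Therefore I may pass to the limit $a\to-\infty$, $b\to+\infty$ on both sides of the finite-interval identity simultaneously; the left side converges to the difference of the two improper integrals and the right side converges to $[u,v](\infty)-[u,v](-\infty)=[u,v]_{-\infty}^{\infty}$. This yields exactly \eqref{eq_Prf14}.

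There is essentially no hard part here: the entire content has been front-loaded into Lemma~\ref{lm_Prf10} (the finite Green's/Lagrange formula, quoted from \cite{Ztt1975}) and Lemma~\ref{lm_Prf12} (existence of the boundary limits at $\pm\infty$). The only point requiring a word of care is that one should take the limits in $a$ and $b$ one at a time — say first fix $b$ and send $a\to-\infty$, obtaining an identity on $(-\infty,b]$, then send $b\to+\infty$ — so that at each stage one is invoking an already-established one-sided limit rather than a joint limit; this is precisely the order in which Lemma~\ref{lm_Prf12} was proved, so the argument closes cleanly.
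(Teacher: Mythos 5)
Your argument is correct and is exactly the route the paper takes: the paper's proof consists of the single sentence that \eqref{eq_Prf14} follows from Lemma~\ref{lm_Prf10} and Lemma~\ref{lm_Prf12}, and you have merely spelled out the limiting procedure (absolute convergence of the integrals via Cauchy--Schwarz, convergence of the boundary terms via Lemma~\ref{lm_Prf12}) that this citation implicitly relies on.
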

\begin{proof}
The identity \eqref{eq_Prf14} is true due to Lemma~\ref{lm_Prf10} and Lemma~\ref{lm_Prf12}.
\end{proof}

In the following proposition we describe the properties of minimal and maximal operators and their adjoints. 

\begin{proposition}\label{pr_Prf10}
For the operators $\mathrm{S}$, $\mathrm{S}_{00}$ and $\mathrm{S}^{+}$, $\mathrm{S}_{00}^{+}$ the following statements 
are fulfilled.
\begin{itemize}
  \item [$1^{0}$.] 
  Operators $\mathrm{S}_{00}$ abd $\mathrm{S}_{00}^{+}$ are densely defined in the Hilbert space $L^{2}(\mathbb{R})$.
  \item [$2^{0}$.] 
  The following relations hold:
\begin{equation*}
  \left(\mathrm{S}_{00}\right)^{\ast}=\mathrm{S}^{+},\qquad \left(\mathrm{S}_{00}^{+}\right)^{\ast}=\mathrm{S}.
\end{equation*}
  \item [$3^{0}$.] 
  Operators $\mathrm{S}$, $\mathrm{S}^{+}$ are closed and operators $\mathrm{S}_{00}$, $\mathrm{S}_{00}^{+}$
are closable,
\begin{equation*}
  \mathrm{S}_{0}:=\left(\mathrm{S}_{00}\right)^{\sim},\quad
\mathrm{S}_{0}^{+}:=\left(\mathrm{S}_{00}^{+}\right)^{\sim}.
\end{equation*}
  \item [$4^{0}$.] Domains of operators $\mathrm{S}_{0}$, $\mathrm{S}_{0}^{+}$ may be described in the following way:
\begin{align*}
  \mathrm{Dom}(\mathrm{S}_{0}) & =\left\{u\in \mathrm{Dom}(\mathrm{S}) \left|\,[u,v]_{-\infty}^{\infty}=0\quad \forall
v\in \mathrm{Dom}(\mathrm{S}^{+})\right.\right\}, \\
  \mathrm{Dom}(\mathrm{S}_{0}^{+}) & =\left\{v\in \mathrm{Dom}(\mathrm{S}^{+}) \left|\,[u,v]_{-\infty}^{\infty}=0\quad
\forall u\in  \mathrm{Dom}(\mathrm{S})\right.\right\}.
\end{align*}
  \item [$5^{0}$.] Domains of operators $\mathrm{S}$, $\mathrm{S}_{0}$, $\mathrm{S}_{00}$ and $\mathrm{S}^{+}$,
$\mathrm{S}_{0}^{+}$, $\mathrm{S}_{00}^{+}$
satisfy the following relations:
\begin{align*}
  u\in \mathrm{Dom}(\mathrm{S}) & \Leftrightarrow \overline{u}\in \mathrm{Dom}(\mathrm{S}^{+}), \\
  u\in \mathrm{Dom}(\mathrm{S}_{0}) & \Leftrightarrow \overline{u}\in\mathrm{Dom}(\mathrm{S}_{0}^{+}), \\
  u\in \mathrm{Dom}(\mathrm{S}_{00}) & \Leftrightarrow \overline{u}\in \mathrm{Dom}(\mathrm{S}_{00}^{+}). 
\end{align*}
\end{itemize}
\end{proposition}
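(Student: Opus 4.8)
The plan is to prove the five items in the order $5^{0}$, $1^{0}$, $2^{0}$, $3^{0}$, $4^{0}$: once $1^{0}$ and the duality $2^{0}$ are available, items $3^{0}$ and $4^{0}$ follow from general facts about adjoint operators together with the Green's formula of Lemmas~\ref{lm_Prf10} and~\ref{lm_Prf14}. I would dispose of $5^{0}$ first, as it halves the work and is a direct computation: if $u,u^{[1]}\in\mathrm{AC}_{loc}(\mathbb{R})$ then $v:=\overline{u}$ satisfies $v^{\{1\}}=v'-\overline{Q}\,v=\overline{u'-Qu}=\overline{u^{[1]}}\in\mathrm{AC}_{loc}(\mathbb{R})$, and one more differentiation gives $l^{+}[\overline{u}]=\overline{l[u]}$; since $l[u]\in L^{2}(\mathbb{R})\Leftrightarrow\overline{l[u]}\in L^{2}(\mathbb{R})$ and $\mathrm{supp}\,\overline{u}=\mathrm{supp}\,u$, all three equivalences of $5^{0}$ follow, and in particular $\mathrm{S}_{00}$ is densely defined iff $\mathrm{S}_{00}^{+}$ is, $\mathrm{S}$ is closed iff $\mathrm{S}^{+}$ is, etc., so it suffices to argue for the unsuperscripted operators.

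For $1^{0}$ the key is the local solvability of the quasi-differential equation $l[u]=f$. Rewriting it as a first-order system for the column $(u,u^{[1]})^{\mathrm{T}}$, whose coefficient matrix has entries in $L_{loc}^{1}(\mathbb{R})$ (built from $Q\in L_{loc}^{2}(\mathbb{R})$ and $Q^{2}-\tau\in L_{loc}^{1}(\mathbb{R})$), the Carath\'eodory existence and uniqueness theorem applies. Hence, for any interval $[a,b]$ and any $f\in L^{2}(a,b)$, the solution of $l[u]=f$ with $u(a)=u^{[1]}(a)=0$ is supported in $[a,b]$ as soon as $f$ obeys the two linear constraints $u(b)=u^{[1]}(b)=0$, and the resulting $u$ lies in $\mathrm{Dom}(\mathrm{S}_{00})$ with $l[u]=f$. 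The associated Green operator $G_{[a,b]}$ maps this codimension-two subspace of $L^{2}(a,b)$ onto $\{u\in\mathrm{Dom}(\mathrm{S}_{00}):\mathrm{supp}\,u\subseteq[a,b]\}$; since $G_{[a,b]}^{\ast}$ is the Green operator for $l^{+}$ and the two-dimensional annihilator of that subspace consists of solutions of $l^{+}[w]=0$, any $g\in L^{2}(a,b)$ orthogonal to the range would satisfy $g=l^{+}\!\left[G_{[a,b]}^{\ast}g\right]=0$; hence the range is dense. Exhausting $\mathbb{R}$ by such intervals yields density of $\mathrm{Dom}(\mathrm{S}_{00})$, and by $5^{0}$ density of $\mathrm{Dom}(\mathrm{S}_{00}^{+})$.

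Item $2^{0}$ is the crux. The inclusion $\mathrm{S}^{+}\subseteq(\mathrm{S}_{00})^{\ast}$ is immediate from Lemma~\ref{lm_Prf14}: for $u\in\mathrm{Dom}(\mathrm{S}_{00})$ the boundary form $[u,v]_{-\infty}^{\infty}$ vanishes, so $\langle l[u],v\rangle=\langle u,l^{+}[v]\rangle$ for every $v\in\mathrm{Dom}(\mathrm{S}^{+})$. Conversely, let $v\in\mathrm{Dom}((\mathrm{S}_{00})^{\ast})$ with $(\mathrm{S}_{00})^{\ast}v=g\in L^{2}(\mathbb{R})$, so that $\langle l[u],v\rangle=\langle u,g\rangle$ for all compactly supported $u\in\mathrm{Dom}(\mathrm{S}_{00})$; testing against the functions $u=G_{[a,b]}f$ from $1^{0}$, localised in arbitrarily small intervals, forces $v$ to be a quasi-derivative solution of $l^{+}[v]=g$ (a du Bois-Reymond type argument), whence $v,v^{\{1\}}\in\mathrm{AC}_{loc}(\mathbb{R})$ and $l^{+}[v]=g\in L^{2}(\mathbb{R})$, i.e. $v\in\mathrm{Dom}(\mathrm{S}^{+})$. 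The identity $(\mathrm{S}_{00}^{+})^{\ast}=\mathrm{S}$ follows by the same reasoning, or from the first one via $5^{0}$ and conjugation.

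Finally, $3^{0}$ and $4^{0}$ are formal. Adjoints of densely defined operators are closed, so $\mathrm{S}^{+}=(\mathrm{S}_{00})^{\ast}$ and $\mathrm{S}=(\mathrm{S}_{00}^{+})^{\ast}$ are closed; since $\mathrm{S}_{00}$, $\mathrm{S}_{00}^{+}$ are densely defined by $1^{0}$, their adjoints are densely defined too, so $\mathrm{S}_{00}$, $\mathrm{S}_{00}^{+}$ are closable and one puts $\mathrm{S}_{0}:=(\mathrm{S}_{00})^{\sim}=(\mathrm{S}^{+})^{\ast}$, $\mathrm{S}_{0}^{+}:=(\mathrm{S}_{00}^{+})^{\sim}=\mathrm{S}^{\ast}$. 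For $4^{0}$: $u\in\mathrm{Dom}(\mathrm{S}_{0})=\mathrm{Dom}((\mathrm{S}^{+})^{\ast})$ means there is $h\in L^{2}(\mathbb{R})$ with $\langle u,l^{+}[v]\rangle=\langle h,v\rangle$ for all $v\in\mathrm{Dom}(\mathrm{S}^{+})$; restricting to $v\in\mathrm{Dom}(\mathrm{S}_{00}^{+})$ and using $2^{0}$ gives $u\in\mathrm{Dom}(\mathrm{S})$ and $h=l[u]$, and then for arbitrary $v\in\mathrm{Dom}(\mathrm{S}^{+})$ the generalized Lagrange identity \eqref{eq_Prf14} makes the defining equality equivalent to $[u,v]_{-\infty}^{\infty}=0$; the description of $\mathrm{Dom}(\mathrm{S}_{0}^{+})$ is symmetric. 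I expect the main obstacle to be the regularity step in $2^{0}$ (equivalently, the local theory behind $1^{0}$): converting the duality relation into a genuine quasi-differential equation for $v$, which rests on the Carath\'eodory solvability of $l[u]=f$ and is precisely where the general theory of \cite{Ztt1975} is invoked.
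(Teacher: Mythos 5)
Your proposal is correct and follows essentially the same route as the paper, which for items $1^{0}$--$4^{0}$ simply defers to the standard quasi-differential theory of Zettl and Naimark (local Carath\'eodory solvability of $l[u]=f$, density of the compactly supported domain via the Green operator and the finite-dimensional kernel of $l^{+}$, the du Bois-Reymond regularity step for the adjoint, and the boundary-form characterization of the closure) and notes that $5^{0}$ is a direct calculation. You have merely written out the details that the paper cites, so there is nothing to correct.
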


The proof of properties~$1^{0}-4^{0}$ in Proposition~\ref{pr_Prf10} is similar to the proof of similar statements 
for symmetric operators on semi-axis~\cite{Ztt1975}, see also \cite{Nai1969}. 
The property~$5^{0}$ is proved by direct calculation.

We use the following estimates obtained in \cite[Lemma~3.2]{HrMk2001} to prove the main theorems. 
\begin{lemma}\label{lm_Prf16}
Let the functions $Q\in L_{unif}^{2}(\mathbb{R})$, $\tau\in L_{unif}^{1}(\mathbb{R})$ and $u\in H^{1}(\mathbb{R})$. 
Then $\forall\varepsilon\in (0,1]$ and $\forall\eta\in (0,1]$ the estimates hold:
\begin{align*}
 \left|(Q,\overline{u}'u)_{L^{2}(\mathbb{R})}\right| & \leq \lVert
Q\rVert_{L_{unif}^{2}(\mathbb{R})}\left(\varepsilon\lVert u'\rVert_{L^{2}(\mathbb{R})}^{2}+4\varepsilon^{-3}\lVert
u\rVert_{L^{2}(\mathbb{R})}^{2}\right), \\
 \left|(\tau,|u|^{2})_{L^{2}(\mathbb{R})}\right| & \leq \lVert
\tau\rVert_{L_{unif}^{1}(\mathbb{R})}\left(\eta\lVert u'\rVert_{L^{2}(\mathbb{R})}^{2}+8\eta^{-1}\lVert
u\rVert_{L^{2}(\mathbb{R})}^{2}\right).
\end{align*}
\end{lemma}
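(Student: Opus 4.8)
The plan is to reduce both estimates to a single elementary one–dimensional Sobolev inequality on a unit interval, then localize and reassemble.

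First I would establish that for every $u\in H^{1}(\mathbb{R})$ and every $n\in\mathbb{Z}$,
\[
 \max_{x\in[n,n+1]}|u(x)|^{2}\leq \int_{n}^{n+1}|u(s)|^{2}\,ds
 +2\left(\int_{n}^{n+1}|u(s)|^{2}\,ds\right)^{1/2}\left(\int_{n}^{n+1}|u'(s)|^{2}\,ds\right)^{1/2}.
\]
This follows by writing $|u(x)|^{2}=|u(y)|^{2}+\int_{y}^{x}2\,\mathrm{Re}\bigl(u'(s)\overline{u(s)}\bigr)\,ds$ for $x,y\in[n,n+1]$, integrating in $y$ over $[n,n+1]$ (a unit interval), and applying the Cauchy--Schwarz inequality to the remainder. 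Setting $a_{n}:=\int_{n}^{n+1}|u|^{2}$ and $b_{n}:=\int_{n}^{n+1}|u'|^{2}$, so that $\sum_{n}a_{n}=\|u\|_{L^{2}(\mathbb{R})}^{2}$ and $\sum_{n}b_{n}=\|u'\|_{L^{2}(\mathbb{R})}^{2}$, together with $\sqrt{x+y}\leq\sqrt{x}+\sqrt{y}$ this gives $\max_{[n,n+1]}|u|\leq\sqrt{a_{n}}+\sqrt{2}\,(a_{n}b_{n})^{1/4}$.

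For the $\tau$–estimate I would split $(\tau,|u|^{2})_{L^{2}(\mathbb{R})}=\sum_{n}\int_{n}^{n+1}\tau|u|^{2}$, bound $\bigl|\int_{n}^{n+1}\tau|u|^{2}\bigr|\leq\|\tau\|_{L_{unif}^{1}(\mathbb{R})}\max_{[n,n+1]}|u|^{2}$, insert the pointwise bound, and sum: the term $\sum_{n}a_{n}$ contributes $\|u\|_{L^{2}(\mathbb{R})}^{2}$, while $\sum_{n}\sqrt{a_{n}}\sqrt{b_{n}}\leq\bigl(\sum_{n}a_{n}\bigr)^{1/2}\bigl(\sum_{n}b_{n}\bigr)^{1/2}=\|u\|_{L^{2}(\mathbb{R})}\|u'\|_{L^{2}(\mathbb{R})}$ by Cauchy--Schwarz. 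Since $2\|u\|\,\|u'\|\leq\eta\|u'\|^{2}+\eta^{-1}\|u\|^{2}$ and $\eta\leq1$, the leftover $\|u\|^{2}$ is absorbed into $\eta^{-1}\|u\|^{2}$, and collecting constants (generously) yields the coefficient $8\eta^{-1}$. For the $Q$–estimate, note $|(Q,\overline{u}'u)_{L^{2}(\mathbb{R})}|\leq\sum_{n}\int_{n}^{n+1}|Q|\,|u'|\,|u|\leq\sum_{n}\|Q\|_{L^{2}[n,n+1]}\,\|u'\|_{L^{2}[n,n+1]}\,\max_{[n,n+1]}|u|\leq\|Q\|_{L_{unif}^{2}(\mathbb{R})}\sum_{n}\sqrt{b_{n}}\bigl(\sqrt{a_{n}}+\sqrt{2}\,(a_{n}b_{n})^{1/4}\bigr)$. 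Here $\sum_{n}\sqrt{a_{n}b_{n}}\leq\|u\|\,\|u'\|$ as above, while $\sum_{n}a_{n}^{1/4}b_{n}^{3/4}\leq\bigl(\sum_{n}a_{n}\bigr)^{1/4}\bigl(\sum_{n}b_{n}\bigr)^{3/4}=\|u\|^{1/2}\|u'\|^{3/2}$ by H\"older with exponents $4$ and $4/3$. It then remains to turn $\|u\|\,\|u'\|$ and $\|u\|^{1/2}\|u'\|^{3/2}$ into the form $\varepsilon\|u'\|^{2}+C\varepsilon^{-k}\|u\|^{2}$: Young with exponents $2,2$ for the first, and the weighted inequality $\|u'\|^{3/2}\|u\|^{1/2}=(\delta\|u'\|^{2})^{3/4}(\delta^{-3}\|u\|^{2})^{1/4}\leq\tfrac34\delta\|u'\|^{2}+\tfrac14\delta^{-3}\|u\|^{2}$ for the second, whence the characteristic power $\varepsilon^{-3}$; choosing $\delta$ proportional to $\varepsilon$ and using $\varepsilon\leq1$ gives the bound with constant $4$.

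The only genuinely delicate point is the bookkeeping of the numerical constants $4$ and $8$ and of the powers $\varepsilon^{-3}$, $\eta^{-1}$; the powers are forced by the H\"older/Young exponents above (the $\varepsilon^{-3}$ is inevitable once the term $\|u\|^{1/2}\|u'\|^{3/2}$ appears), and the constants, while not sharp, follow from the generous estimates indicated — no further idea is needed. A complete argument is given in \cite[Lemma~3.2]{HrMk2001}.
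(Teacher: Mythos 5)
Your sketch is correct: the unit-interval Sobolev bound $\max_{[n,n+1]}|u|^{2}\le a_{n}+2\sqrt{a_{n}b_{n}}$, the $\ell^{2}$- and $\ell^{4/3},\ell^{4}$-H\"older summations, and the weighted Young inequalities do yield the stated estimates, and the constants check out (the $\|u\|^{2}$-coefficient in the first bound comes to $\tfrac{1}{2}\varepsilon^{-1}+\tfrac{27}{8}\varepsilon^{-3}\le\tfrac{31}{8}\varepsilon^{-3}<4\varepsilon^{-3}$ for $\varepsilon\le1$, and $2\eta^{-1}\le8\eta^{-1}$ in the second). The paper itself gives no proof but simply cites \cite[Lemma~3.2]{HrMk2001}, whose argument is the same standard localization; so your proposal matches the intended proof.
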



\section{Proofs}\label{sec:Prf}


\subsection{Proof of Theorem~\ref{thMn_A}}\label{ssec:PrfA}
Consider the sesquilinear forms generated by preminimal operators $\mathrm{S}_{00}(q)$:
\begin{align*}\label{eq_Prf18}
 \dot{t}_{\mathrm{S}_{00}}[u,v] & :=(\mathrm{S}_{00}(q)u,v)_{L^{2}(\mathbb{R})}=
 (u',v')_{L^{2}(\mathbb{R})}-(Q,\overline{u}'v+\overline{u}v')_{L^{2}(\mathbb{R})}+
 (\tau,\overline{u}v)_{L^{2}(\mathbb{R})}, \\
 \mathrm{Dom}(\dot{t}_{\mathrm{S}_{00}}) & :=\mathrm{Dom}(\mathrm{S}_{00}(q)).
\end{align*}
To them correspond the quadratic forms 
\begin{equation*}
 \dot{t}_{\mathrm{S}_{00}}[u]=(u',u')_{L^{2}(\mathbb{R})}-(Q,\overline{u}'u+\overline{u}u')_{L^{2}(\mathbb{R})}+
 (\tau,|u|^{2})_{L^{2}(\mathbb{R})}.
\end{equation*}
We introduce the notation:
\begin{align*}
 t_{Q,\tau}[u,v] & :=-(Q,\overline{u}'v+\overline{u}v')_{L^{2}(\mathbb{R})}+(\tau,\overline{u}v)_{L^{2}(\mathbb{R})},\qquad
 & \mathrm{Dom}(t_{Q,\tau}) & :=\mathrm{Dom}(\mathrm{S}_{00}(q)), \\
 \dot{t}_{0}[u,v] & :=(u',v')_{L^{2}(\mathbb{R})},\qquad & \mathrm{Dom}(\dot{t}_{0}) & :=\mathrm{Dom}(\mathrm{S}_{00}(q)).
\end{align*}
Then due to Lemma~\ref{lm_Prf16} forms $t_{Q,\tau}$ are 0-bounded with respect to 
the  densely defined positive form $\dot{t}_{0}$:
\begin{align}\label{eq_Prf20}
 \left|t_{Q,\tau}[u] \right| & \leq K\varepsilon \dot{t}_{0}[u]+4K\varepsilon^{-3}\lVert u\rVert_{L^{2}(\mathbb{R})}^{2}
 \qquad \forall\varepsilon\in (0,1],\; u\in \mathrm{Dom}(\dot{t}_{0}), \\
 K & := 2\left(\lVert Q\rVert_{L_{unif}^{2}(\mathbb{R})}+\lVert\tau\rVert_{L_{unif}^{1}(\mathbb{R})}\right). \notag
\end{align}

Formula \eqref{eq_Prf20} implies that sesquilinear forms $\dot{t}_{\mathrm{S}_{00}}=\dot{t}_{0}+t_{Q,\tau}$ 
are closable, $t_{\mathrm{S}_{00}}:=(\dot{t}_{\mathrm{S}_{00}})^{\sim}$:
\begin{equation*}\label{eq_Prf30}
  t_{\mathrm{S}_{00}}[u,v]=(u',v')_{L^{2}(\mathbb{R})}-(Q,\overline{u}'v+\overline{u}v')_{L^{2}(\mathbb{R})}+
  (\tau,\overline{u}v)_{L^{2}(\mathbb{R})},\quad \mathrm{Dom}(t_{\mathrm{S}_{00}})=H^{1}(\mathbb{R}).
\end{equation*}
Forms $t_{\mathrm{S}_{00}}$ are densely defined, closed and sectorial. 
Then due to the First Representation Theorem \cite{Kt1995}, with the
sesqulinear forms $t_{\mathrm{S}_{00}}$ we associate $m$-sectorial operators
$\mathrm{S}_{F}(q)$ that are the Friedrichs extensions of operators $\mathrm{S}_{00}(q)$.

\begin{proposition}\label{pr_Prf14}
The $m$-sectorial operators $\mathrm{S}_{F}(q)$ are described in the following way:
\begin{equation*}
  \mathrm{S}_{F}u\equiv \mathrm{S}_{F}(q)u=\mathrm{l}[u],\qquad
   \mathrm{Dom}(\mathrm{S}_{F})=\left\{u\in H^{1}(\mathbb{R})\,\left|\,u,\,u^{[1]}\in
 \mathrm{AC}_{loc}(\mathbb{R}),\mathrm{l}[u]\in L^{2}(\mathbb{R})\right.\right\}.
\end{equation*}
\end{proposition}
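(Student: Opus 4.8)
The plan is to obtain the description of $\mathrm{S}_{F}(q)$ directly from the First Representation Theorem~\cite{Kt1995} applied to the closed sectorial form $t_{\mathrm{S}_{00}}$ with $\mathrm{Dom}(t_{\mathrm{S}_{00}})=H^{1}(\mathbb{R})$. According to that theorem, a function $u$ belongs to $\mathrm{Dom}(\mathrm{S}_{F})$ with $\mathrm{S}_{F}u=f$ if and only if $u\in H^{1}(\mathbb{R})$ and
\[
 t_{\mathrm{S}_{00}}[u,v]=(f,v)_{L^{2}(\mathbb{R})}\qquad \forall v\in H^{1}(\mathbb{R}).
\]
Since by \eqref{eq_Prf20} the form norm of $t_{\mathrm{S}_{00}}$ is equivalent to the norm of $H^{1}(\mathbb{R})$, the antilinear functional $v\mapsto t_{\mathrm{S}_{00}}[u,v]$ is bounded on $H^{1}(\mathbb{R})$ for each fixed $u$, and the same is trivially true of $v\mapsto(f,v)_{L^{2}(\mathbb{R})}$; hence the displayed identity is equivalent to its restriction to the dense subspace $C_{0}^{\infty}(\mathbb{R})\subset H^{1}(\mathbb{R})$, i.e.\ it suffices to test against $v\in C_{0}^{\infty}(\mathbb{R})$.

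First I would verify the inclusion ``$\supseteq$''. Let $u\in H^{1}(\mathbb{R})$ satisfy $u,u^{[1]}\in\mathrm{AC}_{loc}(\mathbb{R})$ and $\mathrm{l}[u]\in L^{2}(\mathbb{R})$, where $u^{[1]}=u'-Qu$. For $v\in C_{0}^{\infty}(\mathbb{R})$ integration by parts over an interval containing $\mathrm{supp}\,v$ is legitimate, because $u$ and $u^{[1]}$ are locally absolutely continuous and all boundary terms vanish; carrying it out converts $(\mathrm{l}[u],v)_{L^{2}(\mathbb{R})}=\int(-(u^{[1]})'-Qu'+\tau u)\overline{v}\,dx$ into exactly $(u',v')_{L^{2}(\mathbb{R})}-(Q,\overline{u}'v+\overline{u}v')_{L^{2}(\mathbb{R})}+(\tau,\overline{u}v)_{L^{2}(\mathbb{R})}=t_{\mathrm{S}_{00}}[u,v]$. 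By the density remark above, the identity $t_{\mathrm{S}_{00}}[u,v]=(\mathrm{l}[u],v)_{L^{2}(\mathbb{R})}$ then holds for every $v\in H^{1}(\mathbb{R})$, so $u\in\mathrm{Dom}(\mathrm{S}_{F})$ and $\mathrm{S}_{F}u=\mathrm{l}[u]$.

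For the converse inclusion ``$\subseteq$'', let $u\in\mathrm{Dom}(\mathrm{S}_{F})$ and put $f:=\mathrm{S}_{F}u\in L^{2}(\mathbb{R})$; then $u\in H^{1}(\mathbb{R})$ and $t_{\mathrm{S}_{00}}[u,v]=(f,v)_{L^{2}(\mathbb{R})}$ for all $v\in C_{0}^{\infty}(\mathbb{R})$. Rearranging this as $(u'-Qu,v')_{L^{2}(\mathbb{R})}=(f+Qu'-\tau u,v)_{L^{2}(\mathbb{R})}$ and letting $v$ range over the test functions, one reads off that the distributional derivative of $u^{[1]}=u'-Qu$ equals $\tau u-Qu'-f$. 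As $Q\in L_{loc}^{2}(\mathbb{R})$, $\tau\in L_{loc}^{1}(\mathbb{R})$ and $u,u'\in L_{loc}^{2}(\mathbb{R})$, the right-hand side belongs to $L_{loc}^{1}(\mathbb{R})$; therefore $u^{[1]}\in\mathrm{AC}_{loc}(\mathbb{R})$, and then $\mathrm{l}[u]=-(u^{[1]})'-Qu'+\tau u=f\in L^{2}(\mathbb{R})$. Together with $u\in H^{1}(\mathbb{R})\subset\mathrm{AC}_{loc}(\mathbb{R})$ this puts $u$ in the domain claimed in the proposition and shows $\mathrm{S}_{F}u=\mathrm{l}[u]$.

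The $m$-sectoriality of $\mathrm{S}_{F}(q)$ has already been recorded in associating these operators with the forms $t_{\mathrm{S}_{00}}$, so nothing new is needed there. The only point that demands care is the passage from $C_{0}^{\infty}(\mathbb{R})$ to all of $H^{1}(\mathbb{R})$ in the form identity; this rests entirely on the equivalence of the form norm of $t_{\mathrm{S}_{00}}$ with the $H^{1}$-norm, which is an immediate consequence of the $0$-boundedness estimate \eqref{eq_Prf20}, the remaining arguments being the routine integration by parts and the standard density of $C_{0}^{\infty}(\mathbb{R})$ in $H^{1}(\mathbb{R})$.
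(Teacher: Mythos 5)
Your argument is correct and is exactly the standard route that the paper itself delegates to \cite[Theorem~3.5]{HrMk2001}: apply the First Representation Theorem to the closed sectorial form $t_{\mathrm{S}_{00}}$ on $H^{1}(\mathbb{R})$, reduce to test functions $v\in C_{0}^{\infty}(\mathbb{R})$ via the equivalence of the form norm with the $H^{1}$-norm coming from \eqref{eq_Prf20}, and identify the associated operator by distributional integration by parts in both directions. The only micro-point worth making explicit is that $\tau u\in L_{loc}^{1}(\mathbb{R})$ because $u\in H^{1}(\mathbb{R})$ is locally bounded (not merely in $L_{loc}^{2}$), but this is immediate and does not affect the proof.
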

The proof of Proposition~\ref{pr_Prf14} is similar to the proof of \cite[Theorem~3.5]{HrMk2001} 
for real-valued distributions $q\in H_{unif}^{-1}(\mathbb{R})$.

Thus we have established that the following relations hold:
\begin{equation}\label{eq_Prf32}
 \mathrm{S}_{00}\subset\mathrm{S}_{0}\subset\mathrm{S}_{F}\subset\mathrm{S}.
\end{equation}
Passing in to the adjoint operators \eqref{eq_Prf32} and using property~$2^{0}$ of Proposition~\ref{pr_Prf10}, 
we obtain:
\begin{equation}\label{eq_Prf34}
 \mathrm{S}_{00}^{+}\subset\mathrm{S}_{0}^{+}\subset\mathrm{S}_{F}^{\ast}\subset\mathrm{S}^{+}.
\end{equation}
One can easily prove that operators $\mathrm{S}_{F}^{\ast}$ coincide with Friedrichs extensions $\mathrm{S}_{F}^{+}$
of operators~$\mathrm{S}_{00}^{+}$.

Let us now define the operators \eqref{eq_10} as form-sums.

Consider the sesquilinear forms generated by the distributions $q\in H_{unif}^{-1}(\mathbb{R})$:
\begin{equation*}
 \dot{t}_{q}[u,v]:=\langle q(x)u,v\rangle, \qquad \mathrm{Dom}(\dot{t}_{q}):=C_{0}^{\infty}(\mathbb{R}),
\end{equation*}
where $\langle\cdot,\cdot\rangle$ is a sesquilinear form  pairing the spaces of generalized functions 
$\mathfrak{D}'(\mathbb{R})$ and test functions $C_{0}^{\infty}(\mathbb{R})$ with respect to the space 
$L^{2}(\mathbb{R})$.

Due to Lemma~\ref{lm_Prf16} for the forms
\begin{equation*}
 \dot{t}_{q}[u]=\langle q(x)u,v\rangle=-(Q,\overline{u}'u+\overline{u}u')_{L^{2}(\mathbb{R})}
+(\tau,|u|^{2})_{L^{2}(\mathbb{R})}, \qquad u\in C_{0}^{\infty}(\mathbb{R}),
\end{equation*}
the following estimates hold:
\begin{equation*}
 \left| \dot{t}_{q}[u]\right|\leq 2\left(\lVert Q\rVert_{L_{unif}^{2}(\mathbb{R})}+ \lVert
\tau\rVert_{L_{unif}^{1}(\mathbb{R})}\right) \left(\lVert u'\rVert_{L^{2}(\mathbb{R})}^{2}+4\lVert
u\rVert_{L^{2}(\mathbb{R})}^{2}\right), \qquad u\in C_{0}^{\infty}(\mathbb{R}).
\end{equation*}
Therefore forms $\dot{t}_{q}$ allow a continuous extension onto the space $H^{1}(\mathbb{R})$ \cite{NmSh1999}. 
The sesquilinear forms $\dot{t}_{q}[u,v]$ on the space $H^{1}(\mathbb{R})$ are represented as:
\begin{equation}\label{eq_Prf36}
 t_{q}[u,v]=-(Q,\overline{u}'v+\overline{u}v')_{L^{2}(\mathbb{R})}+(\tau,\overline{u}v)_{L^{2}(\mathbb{R})}, \qquad
 \mathrm{Dom}(t_{q})=H^{1}(\mathbb{R}).
\end{equation}
One may easily see that the following Lemma is true applying the estimates of Lemma~\ref{lm_Prf16}.
\begin{lemma}\label{lm_Prf18}
The sesquilinear forms $t_{q}$ are 0-bounded with respect to the sesquilinear form
\begin{equation*}
 t_{0}[u,v]:=(u',v')_{L^{2}(\mathbb{R})},\qquad \mathrm{Dom}(t_{0}):=H^{1}(\mathbb{R}).
\end{equation*}
\end{lemma}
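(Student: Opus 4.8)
The plan is to derive the required bound directly on $H^{1}(\mathbb{R})$ from Lemma~\ref{lm_Prf16}, applied termwise to the explicit expression for $t_{q}$. By \eqref{eq_Prf36}, for every $u\in H^{1}(\mathbb{R})$ we have
\[
 t_{q}[u]=-(Q,\overline{u}'u)_{L^{2}(\mathbb{R})}-(Q,\overline{u}u')_{L^{2}(\mathbb{R})}+(\tau,|u|^{2})_{L^{2}(\mathbb{R})},
\]
so it suffices to estimate the three summands separately. The first inequality of Lemma~\ref{lm_Prf16} bounds $\lvert(Q,\overline{u}'u)_{L^{2}(\mathbb{R})}\rvert$; the conjugate cross-term $(Q,\overline{u}u')_{L^{2}(\mathbb{R})}$ obeys an estimate of exactly the same shape (substituting $w=\overline{u}$ and using $\lVert\overline{Q}\rVert_{L_{unif}^{2}(\mathbb{R})}=\lVert Q\rVert_{L_{unif}^{2}(\mathbb{R})}$ reduces it to the same quantity treated in \cite[Lemma~3.2]{HrMk2001}, where in fact the whole combination $(Q,\overline{u}'v+\overline{u}v')$ is handled); and the second inequality of Lemma~\ref{lm_Prf16} bounds $\lvert(\tau,|u|^{2})_{L^{2}(\mathbb{R})}\rvert$.

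Summing these three estimates, one obtains, for all $\varepsilon,\eta\in(0,1]$ and all $u\in H^{1}(\mathbb{R})$,
\begin{equation*}
 \lvert t_{q}[u]\rvert\leq\bigl(2\lVert Q\rVert_{L_{unif}^{2}(\mathbb{R})}\varepsilon+\lVert\tau\rVert_{L_{unif}^{1}(\mathbb{R})}\eta\bigr)\lVert u'\rVert_{L^{2}(\mathbb{R})}^{2}+\bigl(8\lVert Q\rVert_{L_{unif}^{2}(\mathbb{R})}\varepsilon^{-3}+8\lVert\tau\rVert_{L_{unif}^{1}(\mathbb{R})}\eta^{-1}\bigr)\lVert u\rVert_{L^{2}(\mathbb{R})}^{2}.
\end{equation*}
Since $\lVert u'\rVert_{L^{2}(\mathbb{R})}^{2}=t_{0}[u]$, this says exactly that for every prescribed $\delta>0$ one can choose $\varepsilon$ and $\eta$ small enough that the coefficient of $t_{0}[u]$ does not exceed $\delta$, whence $\lvert t_{q}[u]\rvert\leq\delta\,t_{0}[u]+C_{\delta}\lVert u\rVert_{L^{2}(\mathbb{R})}^{2}$ with $C_{\delta}$ depending only on $\delta$, $\lVert Q\rVert_{L_{unif}^{2}(\mathbb{R})}$ and $\lVert\tau\rVert_{L_{unif}^{1}(\mathbb{R})}$. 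Thus the $t_{0}$-bound of $t_{q}$ equals zero, i.e. $t_{q}$ is $0$-bounded with respect to $t_{0}$, which is the assertion of the Lemma. (Note that this is precisely the estimate \eqref{eq_Prf20}, now established on the whole form domain $H^{1}(\mathbb{R})$ rather than only on $\mathrm{Dom}(\mathrm{S}_{00})$; one could alternatively extend \eqref{eq_Prf20} by continuity, but the direct argument above is self-contained.)

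There is essentially no substantial obstacle here: the only point that needs a word of care is the treatment of the cross-term $(Q,\overline{u}u')_{L^{2}(\mathbb{R})}$ for \emph{complex}-valued $Q$, since Lemma~\ref{lm_Prf16} as stated records only the bound for $(Q,\overline{u}'u)_{L^{2}(\mathbb{R})}$; as indicated above this is resolved by the symmetry $w\mapsto\overline{u}$ together with the invariance of the Stepanov norm under complex conjugation. Everything else is the routine bookkeeping of summing the constants and optimizing in $\varepsilon,\eta$.
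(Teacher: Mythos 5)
Your argument is correct and is exactly the route the paper takes: the paper's proof of Lemma~\ref{lm_Prf18} consists of the single remark that it follows by applying the estimates of Lemma~\ref{lm_Prf16} termwise to the representation \eqref{eq_Prf36}, which is what you carry out explicitly (and which reproduces the bound \eqref{eq_Prf20} already recorded for $t_{Q,\tau}$). Your extra care with the second cross-term $(Q,\overline{u}u')_{L^{2}(\mathbb{R})}$ for complex $Q$ is a reasonable fleshing-out of a detail the paper leaves implicit, and it is handled correctly.
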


Thus, the sesquilinear forms
\begin{equation}\label{eq_Prf38}
 t[u,v]:=t_{0}[u,v]+t_{q}[u,v],\qquad \mathrm{Dom}(t):=H^{1}(\mathbb{R}),
\end{equation}
are densely defined, closed and sectorial.
According to the First Representation Theorem \cite{Kt1995} with the forms $t$ one can associate 
$m$-sectorial operators $\mathrm{S}_{fs}(q)$, which are called the \textit{form-sums}
and denoted by:
\begin{align*}
 \mathrm{S}_{fs}\equiv\mathrm{S}_{fs}(q) & :=-\dfrac{d^{2}}{dx^{2}}\dotplus q(x), \\
 \mathrm{Dom}(\mathrm{S}_{fs}(q)) & :=\left\lbrace u\in H^{1}(\mathbb{R})\left| -u''+q(x)u\in
 L^{2}(\mathbb{R})\right.\right\rbrace.
\end{align*}

Since the forms $t$ coincide with the forms $t_{\mathrm{S}_{00}}$, the form-sum operators $\mathrm{S}_{fs}(q)$ and
the Friedrichs extensions $\mathrm{S}_{F}(q)$ of operators $\mathrm{S}_{00}(q)$ coincide: $\mathrm{S}_{F}(q)=\mathrm{S}_{fs}(q)$.

Thus, relations \eqref{eq_Prf32} and \eqref{eq_Prf34} take the following form:
\begin{align}
 \mathrm{S}_{00} & \subset\mathrm{S}_{0}\subset\mathrm{S}_{F}=\mathrm{S}_{fs}\subset\mathrm{S},\quad
 & \mathrm{Dom}(\mathrm{S}_{F})  \subset H^{1}(\mathbb{R}),\;
 \mathrm{Dom}(\mathrm{S})\subset H_{loc}^{1}(\mathbb{R}),\label{eq_Prf40.1} \\
 \mathrm{S}_{00}^{+} & \subset\mathrm{S}_{0}^{+}\subset \mathrm{S}_{F}^{+}=\mathrm{S}_{fs}^{+}
 =\mathrm{S}_{F}^{\ast}=\mathrm{S}_{fs}^{\ast} \subset\mathrm{S}^{+},\quad
 & \mathrm{Dom}(\mathrm{S}_{F}^{+})  \subset H^{1}(\mathbb{R}),\;
 \mathrm{Dom}(\mathrm{S}^{+})\subset H_{loc}^{1}(\mathbb{R}).\label{eq_Prf40.2}
\end{align}

\begin{proposition}\label{pr_Prf16}
Suppose $\mathrm{Dom}(\mathrm{S})\subset H^{1}(\mathbb{R})$. 
Then operators $\mathrm{S}_{0}(q)$ and $\mathrm{S}_{0}^{+}(q)$ are $m$-sectorial and
\begin{align*}
 & \mathrm{S}_{0}=\mathrm{S}_{F}=\mathrm{S}_{fs}=\mathrm{S}, \\ 
 & \mathrm{S}_{0}^{+}=\mathrm{S}_{F}^{+}=\mathrm{S}_{fs}^{+}=\mathrm{S}_{F}^{\ast}=\mathrm{S}_{fs}^{\ast}
 =\mathrm{S}^{+}. 
\end{align*}
\end{proposition}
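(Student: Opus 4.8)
The plan is to show that under the hypothesis $\mathrm{Dom}(\mathrm{S})\subset H^{1}(\mathbb{R})$ the chain of inclusions \eqref{eq_Prf40.1}--\eqref{eq_Prf40.2} collapses to equalities. The starting observation is that, since $\mathrm{Dom}(\mathrm{S})\subset H^{1}(\mathbb{R})$, the maximal operator $\mathrm{S}$ acts on a domain contained in the form domain $H^1(\mathbb{R})$ of the $m$-sectorial operator $\mathrm{S}_F=\mathrm{S}_{fs}$; hence for $u\in\mathrm{Dom}(\mathrm{S})$ and $w\in\mathrm{Dom}(\mathrm{S}_F)$ one has $(\mathrm{S}u,w)_{L^2}=t[u,w]=\overline{t^{*}[w,u]}=(u,\mathrm{S}_F^{*}w)_{L^2}$, where $t^{*}$ is the adjoint form (the analogous form built from $\overline{Q},\overline{\tau}$) and I use that on $H^1(\mathbb{R})$ the form expression agrees with the quasi-differential expression via the generalized Lagrange identity (Lemma~\ref{lm_Prf14}). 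Combined with $\mathrm{S}_F\subset\mathrm{S}$ this already forces $\mathrm{S}\subset(\mathrm{S}_F^{*})^{*}$; but since $\mathrm{S}_F$ is $m$-sectorial it is closed and densely defined, so $(\mathrm{S}_F^{*})^{*}=\mathrm{S}_F$, whence $\mathrm{S}\subset\mathrm{S}_F$. Together with \eqref{eq_Prf40.1} this gives $\mathrm{S}_0=\mathrm{S}_F=\mathrm{S}_{fs}=\mathrm{S}$.

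A cleaner route to the same conclusion, which I would actually carry out, uses Proposition~\ref{pr_Prf10}. By property~$2^{0}$ we have $(\mathrm{S}_{00}^{+})^{*}=\mathrm{S}$ and $(\mathrm{S}_{00})^{*}=\mathrm{S}^{+}$, so $\mathrm{S}=(\mathrm{S}_{0}^{+})^{*}$ and $\mathrm{S}^{+}=(\mathrm{S}_{0})^{*}$ after taking closures. From \eqref{eq_Prf40.2}, $\mathrm{S}_{0}^{+}\subset\mathrm{S}_{F}^{*}$, and passing to adjoints reverses the inclusion: $\mathrm{S}=(\mathrm{S}_{0}^{+})^{*}\supset(\mathrm{S}_{F}^{*})^{*}=\mathrm{S}_{F}$, the last equality again because $\mathrm{S}_F$ is closed and densely defined. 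Since the opposite inclusion $\mathrm{S}_F\subset\mathrm{S}$ is part of \eqref{eq_Prf40.1}, we get $\mathrm{S}_F=\mathrm{S}$. Now $\mathrm{S}_0\subset\mathrm{S}_F=\mathrm{S}$ and also $\mathrm{S}_0\supset$ nothing forces equality directly, so I would argue: $\mathrm{S}$ is now $m$-sectorial (being equal to $\mathrm{S}_F=\mathrm{S}_{fs}$), hence closed; but $\mathrm{S}_0$ is by definition the closure of $\mathrm{S}_{00}$, and $\mathrm{S}_{00}\subset\mathrm{S}_{00}^{+}$'s adjoint situation gives $(\mathrm{S}_0)^{*}=\mathrm{S}^{+}$. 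Taking adjoints of $\mathrm{S}_0\subset\mathrm{S}$ yields $\mathrm{S}^{+}=(\mathrm{S}_0)^{*}\supset\mathrm{S}^{*}$. On the other hand $\mathrm{S}=\mathrm{S}_{fs}$ so $\mathrm{S}^{*}=\mathrm{S}_{fs}^{*}=\mathrm{S}_F^{*}$, and by the symmetric argument applied to the ${}^{+}$-tower (which is legitimate since property~$5^{0}$ shows the hypothesis $\mathrm{Dom}(\mathrm{S})\subset H^1$ is equivalent to $\mathrm{Dom}(\mathrm{S}^{+})\subset H^1$) we also get $\mathrm{S}_F^{+}=\mathrm{S}^{+}$. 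Chasing $\mathrm{S}^{+}=\mathrm{S}_F^{+}=\mathrm{S}_F^{*}=\mathrm{S}^{*}=(\mathrm{S}_0)^{*}$ back through one more adjoint gives $\mathrm{S}_0=(\mathrm{S}^{*})^{*}=\mathrm{S}$ since $\mathrm{S}_0$ is closed.

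Finally, I would record that all four operators $\mathrm{S}_0,\mathrm{S}_F,\mathrm{S}_{fs},\mathrm{S}$ are therefore equal to the $m$-sectorial operator $\mathrm{S}_{fs}$ constructed from the closed sectorial form $t$ in \eqref{eq_Prf38}, and likewise their ${}^{+}$-counterparts coincide with $\mathrm{S}_{fs}^{+}$; the listed equalities $\mathrm{S}_{F}^{+}=\mathrm{S}_{fs}^{+}=\mathrm{S}_F^{*}=\mathrm{S}_{fs}^{*}=\mathrm{S}^{+}$ then follow, the middle ones having already been noted in \eqref{eq_Prf40.2}.

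I expect the main obstacle to be bookkeeping the adjoint-reversal steps without circularity: each time I pass to adjoints I must be sure the operator whose adjoint I take is densely defined (guaranteed by property~$1^{0}$ of Proposition~\ref{pr_Prf10}) and, at the crucial step, that $\mathrm{S}_F$ is closed so that $(\mathrm{S}_F^{*})^{*}=\mathrm{S}_F$ — this is exactly where $m$-sectoriality of $\mathrm{S}_F$, already established via the First Representation Theorem, is used. The one genuinely substantive point is that the hypothesis $\mathrm{Dom}(\mathrm{S})\subset H^{1}(\mathbb{R})$ is what lets $\mathrm{S}$ be squeezed inside the form-defined operator $\mathrm{S}_F$; everything else is formal manipulation of the inclusions \eqref{eq_Prf40.1}--\eqref{eq_Prf40.2} together with property~$2^{0}$ and property~$5^{0}$.
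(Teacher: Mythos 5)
There is a genuine gap, and it sits exactly where the hypothesis $\mathrm{Dom}(\mathrm{S})\subset H^{1}(\mathbb{R})$ has to do work. Your ``cleaner route'' is circular: from $\mathrm{S}_{0}^{+}\subset\mathrm{S}_{F}^{\ast}$ the adjoint reversal gives $(\mathrm{S}_{F}^{\ast})^{\ast}\subset(\mathrm{S}_{0}^{+})^{\ast}$, i.e.\ $\mathrm{S}_{F}\subset\mathrm{S}$ --- which is the \emph{same} inclusion already recorded in \eqref{eq_Prf40.1}, not its opposite. You then declare ``the opposite inclusion $\mathrm{S}_{F}\subset\mathrm{S}$ is part of \eqref{eq_Prf40.1}'' and conclude $\mathrm{S}_{F}=\mathrm{S}$; but you have derived one inclusion twice, not two opposite inclusions. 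Indeed, purely formal adjoint manipulations of \eqref{eq_Prf32}--\eqref{eq_Prf34} together with property~$2^{0}$ can never yield the reverse inclusion $\mathrm{S}\subset\mathrm{S}_{F}$: they use the hypothesis nowhere, and without that hypothesis the minimal and maximal operators need not coincide. Everything downstream of this step collapses.

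Your first sketch contains the right idea --- show $(\mathrm{S}u,w)_{L^{2}}=(u,\mathrm{S}_{F}^{\ast}w)_{L^{2}}$ for $u\in\mathrm{Dom}(\mathrm{S})$, $w\in\mathrm{Dom}(\mathrm{S}_{F}^{\ast})$, and deduce $\mathrm{S}\subset(\mathrm{S}_{F}^{\ast})^{\ast}=\mathrm{S}_{F}$ --- but the assertion that ``the form expression agrees with the quasi-differential expression via the generalized Lagrange identity'' is precisely the unproved substantive step. Lemma~\ref{lm_Prf14} only gives $(\mathrm{l}[u],v)-(u,\mathrm{l}^{+}[v])=[u,v]_{-\infty}^{\infty}$; you still must prove that the boundary form vanishes at $\pm\infty$. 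The paper does this by noting that $Q\in L_{unif}^{2}(\mathbb{R})$ and $u\in H^{1}(\mathbb{R})$ imply $Qu\in L^{2}(\mathbb{R})$, hence $u^{[1]}=u'-Qu$ and $v^{\{1\}}=v'-\overline{Q}v$ lie in $L^{2}(\mathbb{R})$, so the limits $[u,v](\pm\infty)$ (which exist by Lemma~\ref{lm_Prf12}) must equal zero; it then concludes directly from the boundary-form characterization of $\mathrm{Dom}(\mathrm{S}_{0})$ in property~$4^{0}$ of Proposition~\ref{pr_Prf10} that $\mathrm{S}_{0}=\mathrm{S}$, which is both shorter and avoids your remaining bookkeeping for $\mathrm{S}_{0}$ versus $\mathrm{S}_{F}$. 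To repair your argument you would need to supply exactly this computation (\eqref{eq_Prf44.1}--\eqref{eq_Prf44.2}); once it is in place, either route closes.
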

\begin{proof}
Let the assumptions of Proposition~\ref{pr_Prf16} be fulfilled.
Then due to property~$5^{0}$ of Proposition~\ref{pr_Prf10} we also have 
$\mathrm{Dom}(\mathrm{S}^{+})\subset H^{1}(\mathbb{R})$.

For $Q\in L_{unif}^{2}(\mathbb{R})$ and $u\in H^{1}(\mathbb{R})$ we have $Qu\in L^{2}(\mathbb{R})$ 
\cite[Theorem~3.5]{HrMk2001} and therefore
\begin{align*}
 u^{[1]}=u'-Qu\in L^{2}(\mathbb{R}),\qquad
 v^{\{1\}}=v'-\overline{Q}v\in L^{2}(\mathbb{R}).
\end{align*}
So, $\forall u\in \mathrm{Dom}(\mathrm{S})$ and $\forall v\in \mathrm{Dom}(\mathrm{S}^{+})$ we obtain:
\begin{align}
 [u,v](-\infty) & =\lim_{t\rightarrow -\infty}[u,v](t)=\lim_{t\rightarrow
-\infty}\left(u(t)\overline{v^{\{1\}}(t)}-u^{[1]}(t)\overline{v(t)}\right)=0, \label{eq_Prf44.1} \\
 [u,v](\infty) & =\lim_{t\rightarrow \infty}[u,v](t)=\lim_{t\rightarrow
 \infty}\left(u(t)\overline{v^{\{1\}}(t)}-u^{[1]}(t)\overline{v(t)}\right)=0. \label{eq_Prf44.2}
\end{align}
Taking into account \eqref{eq_Prf44.1} and \eqref{eq_Prf44.2}, property~$4^{0}$ of Proposition~\ref{pr_Prf10} implies 
the equalities:
\begin{equation*}
 \mathrm{S}_{0}=\mathrm{S}_{F}=\mathrm{S},\qquad \mathrm{S}_{0}^{+}=\mathrm{S}_{F}^{+}=\mathrm{S}^{+}.
\end{equation*}

Proposition is proved.
\end{proof}

Due to Proposition~\ref{pr_Prf12} (see Section 3.3~\ref{ssec:PrfC}) operators $\mathrm{S}_{0}(q)$ are quasiaccretive:
\begin{equation*}
 \mathrm{Re}\,(\mathrm{S}_{0}u,u)_{L^{2}(\mathbb{R})}\geq -4\left(2K+1\right)^{4}\lVert
 u\rVert_{L^{2}(\mathbb{R})}^{2},\qquad u\in \mathrm{Dom}(\mathrm{S}_{0}).
\end{equation*}
In what follows w.l.a.g. we assume that
\begin{equation}\label{eq_Prf50}
 \mathrm{Re}\,(\mathrm{S}_{0}u,u)_{L^{2}(\mathbb{R})}\geq \lVert u\rVert_{L^{2}(\mathbb{R})}^{2},\qquad u\in
\mathrm{Dom}(\mathrm{S}_{0}).
\end{equation}
Obviously, together with \eqref{eq_Prf50} the following is also true:
\begin{equation}\label{eq_Prf52}
 \mathrm{Re}\,(\mathrm{S}_{0}^{+}v,v)_{L^{2}(\mathbb{R})}\geq \lVert v\rVert_{L^{2}(\mathbb{R})}^{2},\qquad v\in
 \mathrm{Dom}(\mathrm{S}_{0}^{+}).
\end{equation}
Indeed, taking into account the property~$5^{0}$ of Proposition~\ref{pr_Prf10} we get:
\begin{equation*}
 \mathrm{Re}\,(\mathrm{S}_{0}^{+}v,v)_{L^{2}(\mathbb{R})}
 =\mathrm{Re}\,\overline{(\mathrm{S}_{0}^{+}v,v)}_{L^{2}(\mathbb{R})}
 =\mathrm{Re}\,(\mathrm{S}_{0}\overline{v},\overline{v})_{L^{2}(\mathbb{R})}
 \geq  \lVert v\rVert_{L^{2}(\mathbb{R})}^{2}\qquad \forall v\in \mathrm{Dom}(\mathrm{S}_{0}^{+}).
\end{equation*}

The following lemma is used in the proof of Theorem~\ref{thMn_A}. 
It is proved by direct calculation.

\begin{lemma}\label{lm_Prf20}
Suppose $u\in\mathrm{Dom}(\mathrm{S})$. 
Then $\forall\varphi\in C_{0}^{\infty}(\mathbb{R})$:
\begin{equation*}
 i)\;\mathrm{l}[\varphi
 u]=\varphi\mathrm{l}[u]-\varphi''u-2\varphi'u';\qquad ii)\; \varphi u\in \mathrm{Dom}(\mathrm{S}_{00}). \hspace{150pt}
\end{equation*}
\end{lemma}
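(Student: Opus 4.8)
The plan is to verify both claims by direct computation, using only the definitions of $\mathrm{l}$, $\mathrm{Dom}(\mathrm{S})$ and $\mathrm{Dom}(\mathrm{S}_{00})$ together with the regularity built into the maximal domain.

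\smallskip

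First I would prove $i)$. Recall from \eqref{eq_10} that $\mathrm{l}[y] = -(y'-Qy)' - Qy' = -(y^{[1]})' - Qy'$, where $y^{[1]} := y' - Qy$. For $u \in \mathrm{Dom}(\mathrm{S})$ and $\varphi \in C_{0}^{\infty}(\mathbb{R})$, put $w := \varphi u$. Since $u, u^{[1]} \in \mathrm{AC}_{loc}(\mathbb{R})$ and $\varphi$ is smooth, $w \in \mathrm{AC}_{loc}(\mathbb{R})$ and $w' = \varphi' u + \varphi u'$. Then $w^{[1]} = w' - Qw = \varphi'u + \varphi u' - \varphi Q u = \varphi u^{[1]} + \varphi' u$, which again lies in $\mathrm{AC}_{loc}(\mathbb{R})$ because $u^{[1]}, u \in \mathrm{AC}_{loc}(\mathbb{R})$ and $\varphi, \varphi' \in C^{\infty}$. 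Differentiating, $(w^{[1]})' = \varphi (u^{[1]})' + \varphi' u^{[1]} + \varphi'' u + \varphi' u'$. Also $Q w' = \varphi Q u' + \varphi' Q u$. Hence
\[
\mathrm{l}[\varphi u] = -(w^{[1]})' - Qw' = -\varphi(u^{[1]})' - \varphi' u^{[1]} - \varphi'' u - \varphi' u' - \varphi Q u' - \varphi' Q u .
\]
Now $-\varphi' u^{[1]} - \varphi' Q u = -\varphi'(u' - Qu) - \varphi' Q u = -\varphi' u'$, so the right-hand side collapses to $\varphi\bigl(-(u^{[1]})' - Qu'\bigr) - \varphi'' u - 2\varphi' u' = \varphi\,\mathrm{l}[u] - \varphi'' u - 2\varphi' u'$, which is $i)$. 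The only mildly delicate point is the cancellation of the two terms carrying the singular factor $Q$; everything else is the Leibniz rule.

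\smallskip

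For $ii)$, I must check that $\varphi u$ lies in $\mathrm{Dom}(\mathrm{S}_{00})$, i.e. that it has compact support, that $\varphi u$ and $(\varphi u)^{[1]}$ are in $\mathrm{AC}_{loc}(\mathbb{R})$, and that $\mathrm{l}[\varphi u] \in L^{2}(\mathbb{R})$. Compact support is immediate since $\mathrm{supp}(\varphi u) \subseteq \mathrm{supp}\,\varphi \Subset \mathbb{R}$. The two $\mathrm{AC}_{loc}$ conditions were already established in the computation of $i)$. For the $L^{2}$ condition, use the formula just proved: $\varphi\,\mathrm{l}[u] \in L^{2}(\mathbb{R})$ because $\mathrm{l}[u] \in L^{2}(\mathbb{R})$ (as $u \in \mathrm{Dom}(\mathrm{S})$) and $\varphi$ is bounded with compact support; $\varphi'' u \in L^{2}(\mathbb{R})$ because $u \in L^{2}(\mathbb{R})$ and $\varphi'' \in C_{0}^{\infty}(\mathbb{R})$; and $\varphi' u' \in L^{2}(\mathbb{R})$ because $\mathrm{Dom}(\mathrm{S}) \subset H_{loc}^{1}(\mathbb{R})$ (as noted in \eqref{eq_Prf40.1}), so $u' \in L^{2}_{loc}(\mathbb{R})$, and multiplication by the compactly supported $\varphi'$ lands it in $L^{2}(\mathbb{R})$. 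Summing the three terms gives $\mathrm{l}[\varphi u] \in L^{2}(\mathbb{R})$, so $\varphi u \in \mathrm{Dom}(\mathrm{S}_{00})$.

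\smallskip

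The step requiring the most care is not any single estimate but keeping track of which objects are genuine functions and which are only distributions: $Q$ is merely $L^{2}_{loc}$, so products like $Qu$ and $Qu'$ must be handled as the quasi-derivative bookkeeping of \eqref{eq_10} dictates rather than by naive term-by-term differentiation. Once the identity $i)$ is in hand, part $ii)$ is a routine matter of reading off integrability of each summand, using $u \in L^{2}(\mathbb{R})$, $\mathrm{l}[u] \in L^{2}(\mathbb{R})$, and $u' \in L^{2}_{loc}(\mathbb{R})$ against the compact support of $\varphi$ and its derivatives.
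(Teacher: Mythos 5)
Your computation is correct and is exactly the "direct calculation" the paper invokes without writing out: the quasi-derivative bookkeeping $(\varphi u)^{[1]}=\varphi u^{[1]}+\varphi' u$, the cancellation of the $Q$-terms, and the integrability check using $u,\,\mathrm{l}[u]\in L^{2}(\mathbb{R})$ and $u'=u^{[1]}+Qu\in L^{2}_{loc}(\mathbb{R})$ are all as intended. The only cosmetic remark is that with the representation $q=Q'+\tau$ the expression $\mathrm{l}$ carries an extra term $\tau y$, which transforms trivially as $\tau(\varphi u)=\varphi(\tau u)$ and so changes nothing in either part.
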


Now let us prove Theorem~\ref{thMn_A}.

Let us prove that operators $\mathrm{S}_{0}(q)$ are quasi-$m$-accretive. 
It is sufficient to show that 
\begin{equation*}
 \mathrm{def}\,\mathrm{S}_{0}(q):=\mathrm{dim}\,(\mathrm{ran}\,\mathrm{S}_{0}(q))^{\bot}\equiv\mathrm{dim}\,(\ker\,\mathrm{S}^{+}(q))=0.
\end{equation*}

Let $v(x)$ be a solution of the equation 
\begin{equation}\label{eq_Prf54}
 \mathrm{S}^{+}(q)v=0.
\end{equation}
Let us show that $v(x)\equiv 0$.

For any real function $\varphi\in C_{0}^{\infty}(\mathbb{R})$ due to Lemma~\ref{lm_Prf20} and
property~$5^{0}$ of Proposition~\ref{pr_Prf10} we have $\varphi v\in \mathrm{Dom}(\mathrm{S}_{00}^{+})$. 
Therefore, taking into consideration that $\mathrm{l}^{+}[v]=0$ due to \eqref{eq_Prf54}, 
one calculates:
\begin{equation}\label{eq_Prf56}
 (\mathrm{S}^{+}\varphi v,\varphi v)_{L^{2}(\mathbb{R})}=\int_{\mathbb{R}}(\varphi')^{2}|v|^{2}d\,x
 +\int_{\mathbb{R}}\varphi\varphi'(v\overline{v}'-v'\overline{v})d\,x.
\end{equation}
Considering \eqref{eq_Prf50} and that 
\begin{equation*}
 \mathrm{Re}\,\int_{\mathbb{R}}\varphi\varphi'(v\overline{v}'-v'\overline{v})d\,x=0,
\end{equation*}
from \eqref{eq_Prf56} we obtain:
\begin{equation}\label{eq_Prf58}
 \int_{\mathbb{R}}(\varphi')^{2}|v|^{2}d\,x\geq \int_{\mathbb{R}}\varphi^{2}|v|^{2}d\,x\qquad \forall \varphi\in
 C_{0}^{\infty}(\mathbb{R}),\; \mathrm{Im}\,\varphi=0.
\end{equation}
Let us then take a sequence of functions $\{\varphi_{n}\}_{n\in \mathbb{N}}$ such that:
\begin{itemize}
 \item [i)] $\varphi_{n}\in C_{0}^{\infty}(\mathbb{R})$, $\mathrm{Im}\,\varphi_{n}\equiv0$;
 \item [ii)] $\mathrm{supp}\,\varphi_{n}\subset [-n-1,n+1]$;
 \item [iii)] $\varphi_{n}(x)=1$, $x\in [-n,n]$;
 \item [iv)] $|\varphi_{n}'(x)|\leq C$.
\end{itemize}
Substituting functions $\varphi_{n}$ into \eqref{eq_Prf58} we receive
\begin{equation*}
 \int_{-n}^{n}|v|^{2}d\,x\leq \int_{\mathbb{R}}\varphi_{n}^{2}|v|^{2}d\,x\leq \int_{\mathbb{R}}(\varphi_{n}')^{2}|v|^{2}d\,x
 \leq C^{2}\int_{n\leq|x|\leq n+1}\limits|v|^{2}d\,x,
\end{equation*}
that is
\begin{equation}\label{eq_Prf60}
 \int_{-n}^{n}|v|^{2}d\,x\leq C^{2}\int_{n\leq|x|\leq n+1}\limits|v|^{2}d\,x.
\end{equation}
Taking into account that $v(x)\in L^{2}(\mathbb{R})$, passing in \eqref{eq_Prf60} to the limit as $n\rightarrow\infty$
we obtain $v(x)\equiv 0$.

Thus, operators $\mathrm{S}_{0}(q)$ are proved to be quasi-$m$-accretive. 
Due to Proposition~\ref{pr_Prf12} they are $m$-sectorial.

Therefore, by the properties of the Friedrichs extensions \cite{Kt1995} we have:
\begin{equation}\label{eq_Prf62}
 \mathrm{S}_{0}(q)=\mathrm{S}_{F}(q).
\end{equation}
Then taking into account property~$2^{0}$ of Proposition~\ref{pr_Prf10} from \eqref{eq_Prf62} we derive:
\begin{equation*}
 \mathrm{S}^{+}(q)=\mathrm{S}_{F}^{+}(q),\qquad \mathrm{Dom}(\mathrm{S}^{+}(q))\subset H^{1}(\mathbb{R}).
\end{equation*}
Due to the property~$5^{0}$ of Proposition~\ref{pr_Prf10} from Proposition~\ref{pr_Prf16} we finally get necessary result
\begin{equation*}
 \mathrm{S}_{0}=\mathrm{S}_{F}=\mathrm{S}_{fs}=\mathrm{S}.
\end{equation*}

Theorem~\ref{thMn_A} is proved completely. \hfill{$\square$}


\subsection{Proof of Theorem~\ref{thMn_B}}\label{ssec:PrfB}
Let us suppose that the assumptions of theorem, that is the formula \eqref{eq_22} (or equivalently \eqref{eq_24}), hold. 
Consider the sesquilinear forms 
\begin{align*}
 \dot{t}_{0}[u,v] & :=(\mathrm{S}(q)u,v)_{L^{2}(\mathbb{R})},\qquad
 \mathrm{Dom}(\dot{t}_{0}):=\mathrm{Dom}(\mathrm{S}(q)), \\
  \dot{t}_{n}[u,v] & :=(\mathrm{S}(q_{n})u,v)_{L^{2}(\mathbb{R})},\qquad
 \mathrm{Dom}(\dot{t}_{n}):=\mathrm{Dom}(\mathrm{S}(q_{n})),\;n\in \mathbb{N}.
\end{align*}
The forms $\dot{t}_{0}$ and $\dot{t}_{n}$, $n\in \mathbb{N}$, are densely defined, closable and sectorial. 
Their closures may be represented in the following way:
\begin{align*}
 t_{0}[u,v] & =(u',v')_{L^{2}(\mathbb{R})}-(Q,\overline{u}'v+\overline{u}v')_{L^{2}(\mathbb{R})}+
 (\tau,\overline{u}v)_{L^{2}(\mathbb{R})}, \qquad \mathrm{Dom}(t_{0})=H^{1}(\mathbb{R}), \\
  t_{n}[u,v] & =(u',v')_{L^{2}(\mathbb{R})}-(Q_{n},\overline{u}'v+\overline{u}v')_{L^{2}(\mathbb{R})}+
 (\tau_{n},\overline{u}v)_{L^{2}(\mathbb{R})}, \qquad \mathrm{Dom}(t_{n})=H^{1}(\mathbb{R}).
\end{align*}

Further, applying the estimates of Lemma~\ref{lm_Prf16} we get:
\begin{equation}\label{eq_Prf100}
 \left|t_{n}[u]-t_{0}[u]\right|\leq a_{n}\lVert u'\rVert_{L^{2}(\mathbb{R})}^{2}+4a_{n}\lVert
 u\rVert_{L^{2}(\mathbb{R})}^{2},
\end{equation}
where
\begin{equation*}
 a_{n}:=2\left(\lVert Q-Q_{n}\rVert_{L_{unif}^{2}(\mathbb{R})}+\lVert
 \tau-\tau_{n}\rVert_{L_{unif}^{1}(\mathbb{R})}\right),
\end{equation*}
and similarly to the proof of Lemma~\ref{lm_SsqlFm10} (see below) we obtain:
\begin{equation}\label{eq_Prf102}
 2\mathrm{Re}\,t_{0}[u]+4\lVert u\rVert_{L^{2}(\mathbb{R})}^{2}\geq \lVert u'\rVert_{L^{2}(\mathbb{R})}^{2}.
\end{equation}
Formulas \eqref{eq_Prf100} and \eqref{eq_Prf102} together with \eqref{eq_22}, \eqref{eq_24} imply:
\begin{equation*}\label{eq_Prf104}
 \left|t_{n}[u]-t_{0}[u]\right|\leq 2a_{n}\mathrm{Re}\,t_{0}[u]+8a_{n}\lVert
 u\rVert_{L^{2}(\mathbb{R})}^{2},\qquad a_{n}\rightarrow 0,\; n\rightarrow\infty.
\end{equation*}

To complete the proof we only need to apply \cite[Theorem~VI.3.6]{Kt1995}.

Theorem~\ref{thMn_B} is proved completely. \hfill{$\square$}

To prove Corollary~\ref{cr_thMnB_B1} we need in an auxiliary result. It has an independent interest also.
\begin{theorem}\label{th_StpSp10}
The set
\begin{equation}\label{eq_StpSp10}
 C^{\infty}(\mathbb{R})\cap L_{unif}^{p}(\mathbb{R})
\end{equation}
is everywhere dense in the Stepanov space $L_{unif}^{p}(\mathbb{R})$, $1\leq p<\infty$.
\end{theorem}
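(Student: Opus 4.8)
The plan is to prove density of $C^\infty(\mathbb{R})\cap L_{unif}^p(\mathbb{R})$ in $L_{unif}^p(\mathbb{R})$ by a standard mollification argument, being careful that the approximating functions retain the uniform (Stepanov) integrability bound. First I would fix $f\in L_{unif}^p(\mathbb{R})$ and $\varepsilon>0$, take a nonnegative mollifier $\rho\in C_0^\infty(\mathbb{R})$ with $\operatorname{supp}\rho\subset[-1,1]$ and $\int_\mathbb{R}\rho=1$, set $\rho_\delta(x):=\delta^{-1}\rho(x/\delta)$ for $\delta\in(0,1]$, and consider $f_\delta:=f*\rho_\delta$. Since $f\in L_{loc}^p(\mathbb{R})\subset L_{loc}^1(\mathbb{R})$, the convolution $f_\delta$ is well-defined and belongs to $C^\infty(\mathbb{R})$.

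The key quantitative step is to show that the Stepanov norm is essentially preserved and that $f_\delta\to f$ in $L_{unif}^p(\mathbb{R})$. For the uniform bound one uses Jensen's (or Minkowski's integral) inequality: for any $t\in\mathbb{R}$,
\begin{equation*}
 \int_t^{t+1}|f_\delta(x)|^p\,dx
 \leq \int_t^{t+1}\!\!\int_{-\delta}^{\delta}\rho_\delta(y)|f(x-y)|^p\,dy\,dx
 = \int_{-\delta}^{\delta}\rho_\delta(y)\!\int_{t-y}^{t+1-y}\!|f(x)|^p\,dx\,dy
 \leq \|f\|_{L_{unif}^p(\mathbb{R})}^p,
\end{equation*}
so $\|f_\delta\|_{L_{unif}^p(\mathbb{R})}\leq\|f\|_{L_{unif}^p(\mathbb{R})}$, and in particular $f_\delta\in C^\infty(\mathbb{R})\cap L_{unif}^p(\mathbb{R})$. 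For the convergence, I would estimate, for fixed $t$, the quantity $\int_t^{t+1}|f_\delta(x)-f(x)|^p\,dx$ by the $L^p$-modulus of continuity of translations of $f$ on the slightly enlarged window $[t-1,t+2]$: writing $f_\delta(x)-f(x)=\int_{-\delta}^{\delta}\rho_\delta(y)(f(x-y)-f(x))\,dy$ and applying Minkowski's integral inequality gives
\begin{equation*}
 \left(\int_t^{t+1}|f_\delta(x)-f(x)|^p\,dx\right)^{1/p}
 \leq \sup_{|y|\leq\delta}\left(\int_t^{t+1}|f(x-y)-f(x)|^p\,dx\right)^{1/p}.
\end{equation*}

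The main obstacle is that the right-hand side must be made small \emph{uniformly in $t$}, whereas the classical statement $\|\tau_y g-g\|_{L^p}\to0$ only gives continuity of translation for a fixed $L^p$ function, not uniformly over an infinite family of windows. To overcome this I would truncate: given $\eta>0$, choose $g\in C_0^\infty(\mathbb{R})$ with $\int_{t_0-1}^{t_0+2}|f-g|^p<\eta$ for one badly-chosen $t_0$ — no, this does not suffice globally. Instead, the correct route is to note that on each unit window the restriction of $f$ lies in $L^p$ of a bounded interval with norm $\leq\|f\|_{L_{unif}^p}$, and the $L^p$-modulus of continuity $\omega_p(\delta;[t-1,t+2]):=\sup_{|y|\leq\delta}\|\tau_y f-f\|_{L^p(t,t+1)}$ satisfies $\omega_p(\delta;[t-1,t+2])\leq 2^{1/p}\,\omega_p(\delta)$ where $\omega_p(\delta):=\sup_{t}\sup_{|y|\leq\delta}\|\tau_y f-f\|_{L^p(t,t+1)}$ is the Stepanov modulus of continuity; one then needs $\omega_p(\delta)\to0$ as $\delta\to0$. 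This last fact is precisely the statement that translation is continuous in the Stepanov norm, which one proves by first approximating $f$ in $L_{unif}^p(\mathbb{R})$ by a bounded function (truncating $|f|$ at height $N$ off a set of small Stepanov measure) and then by a locally uniformly continuous bounded function, for which the uniform translation estimate is elementary; here one invokes that step functions with uniformly bounded heights and the rational grid are dense in $L_{unif}^p$. Assembling these pieces, $f_\delta\to f$ in $L_{unif}^p(\mathbb{R})$ as $\delta\to0^+$, which completes the proof.
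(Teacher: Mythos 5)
Your overall strategy has a genuine, unrepairable gap: the lemma you ultimately need, namely that the Stepanov modulus of continuity $\omega_p(\delta)=\sup_{t}\sup_{|y|\leq\delta}\lVert \tau_y f-f\rVert_{L^p(t,t+1)}$ tends to $0$ as $\delta\to0$ for every $f\in L_{unif}^p(\mathbb{R})$, is false, and consequently $f*\rho_\delta$ need not converge to $f$ in $L_{unif}^p(\mathbb{R})$. Take
$f=\sum_{n\geq 3} n^{1/p}\chi_{[n,\,n+1/n]}$. Each bump contributes $\int_{n}^{n+1/n}|f|^p\,dx=1$ and a unit window meets at most two bumps, so $f\in L_{unif}^p(\mathbb{R})$. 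But for any $y\in(0,1/2)$ and any $n$ with $1/n<y$ the set $[n-y,\,n+1/n-y]$ is disjoint from every bump, so $\tau_yf-f=-f$ on $[n,n+1/n]$ and hence $\omega_p(\delta)\geq 1$ for every $\delta>0$: translation is not continuous in the Stepanov norm. The same example defeats mollification directly: for fixed $\delta\in(0,1/2)$ one has $\lVert f*\rho_\delta\rVert_{L^\infty(n,n+1/n)}\leq C_\delta\, n^{1/p-1}$, hence $\lVert f*\rho_\delta\rVert_{L^p(n,n+1/n)}\leq C_\delta n^{-1}\to0$, while $\lVert f\rVert_{L^p(n,n+1/n)}=1$; therefore $\lVert f-f*\rho_\delta\rVert_{L_{unif}^p(\mathbb{R})}\geq 1$ for all $\delta$. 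Your proposed repair (truncate at height $N$, then pass to uniformly continuous functions) fails at its first step for the same reason: $\lVert f-g\rVert_{L^p(n,n+1/n)}\geq 1-\lVert g\rVert_{L^\infty}n^{-1/p}$ for every bounded $g$, so bounded functions are not dense in $L_{unif}^p(\mathbb{R})$. The translation-continuous (and the bounded) elements form a proper closed subspace of the non-separable Stepanov space, so no translation-invariant smoothing procedure can prove the theorem.

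The statement is nevertheless true, and the paper's proof is purely local rather than convolutional: write $f=\sum_{n\in\mathbb{Z}}f_n$ with $f_n=\chi_{[n,n+1)}f$, choose $g_n\in C_0^\infty(\mathbb{R})$ with $\mathrm{supp}\,g_n\subset(n,n+1)$ and $\lVert f_n-g_n\rVert_{L^p}<\varepsilon 2^{-|n|-2}$, and set $g_\varepsilon:=\sum_n g_n$. This locally finite sum is smooth, and since the Stepanov norm of a function is dominated by its global $L^p(\mathbb{R})$ norm, $\lVert f-g_\varepsilon\rVert_{L_{unif}^p(\mathbb{R})}\leq\lVert f-g_\varepsilon\rVert_{L^p(\mathbb{R})}<\varepsilon$. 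Note that the approximants obtained this way are in general unbounded smooth functions, which is precisely why this local patching succeeds where mollification cannot.
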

\begin{proof}
Set for $f\in L_{loc}^{p}(\mathbb{R})$, $f_{n}:=\chi_{[n,n+1)}f$, $n\in \mathbb{Z}$. 
Let $\varepsilon>0$ be given. 
Since the set $C_{0}^{\infty}(a,b)$ is dense in the space $L^{p}(a,b)$, there is a function sequence 
$g_{n}\in C_{0}^{\infty}(\mathbb{R})$, $\mathrm{supp}\,g_{n}\subset (n,n+1)$, such that 
$\lVert f_{n}-g_{n}\rVert_{L^{p}(\mathbb{R})}<\varepsilon 2^{-|n|-2}$. 
Set $g_{\varepsilon}:=\sum_{n\in \mathbb{Z}}g_{n}$. Then $g_{\varepsilon}\in C^{\infty}(\mathbb{R})$ and 
$\lVert f-g_{\varepsilon}\rVert_{L^{p}(\mathbb{R})}<\varepsilon$. 
If $f\in L_{unif}^{p}(\mathbb{R})$, then the function $g_{\varepsilon}\in L_{unif}^{p}(\mathbb{R})$ 
since$\lVert f-g_{\varepsilon}\rVert_{L_{unif}^{p}(\mathbb{R})}<\varepsilon$.
If the function $f$ is real-valued, then so are the functions $f_n$ as well.
Therefore, the functions $ g_{\varepsilon} $ may be chosen to be real-valued.
\end{proof}

Theorem~\ref{th_StpSp10} and \cite[Theorem~2.1]{HrMk2001} imply the following important statement.
\begin{corollaryTh}\label{cr_StpSp10}
The set
\begin{equation*}\label{eq_StpSp28}
 C^{\infty}(\mathbb{R})\cap L_{unif}^{1}(\mathbb{R})
\end{equation*}
is everywhere dense in the space $H_{unif}^{-1}(\mathbb{R})$.
\end{corollaryTh}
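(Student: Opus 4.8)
The plan is to reduce the statement to the smooth Stepanov approximation of Theorem~\ref{th_StpSp10} by means of the description of $H_{unif}^{-1}(\mathbb{R})$ recorded in \cite[Theorem~2.1]{HrMk2001}. Recall that the latter says that each $q\in H_{unif}^{-1}(\mathbb{R})$ can be written as $q=Q'+\tau$ with $Q\in L_{unif}^{2}(\mathbb{R})$, $\tau\in L_{unif}^{1}(\mathbb{R})$, and that the arising map $(Q,\tau)\mapsto Q'+\tau$ is continuous from $L_{unif}^{2}(\mathbb{R})\times L_{unif}^{1}(\mathbb{R})$ onto $H_{unif}^{-1}(\mathbb{R})$; in particular $Q_{n}\to Q$ in $L_{unif}^{2}(\mathbb{R})$ together with $\tau_{n}\to\tau$ in $L_{unif}^{1}(\mathbb{R})$ forces $Q_{n}'+\tau_{n}\to Q'+\tau$ in $H_{unif}^{-1}(\mathbb{R})$. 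This is exactly the equivalence \eqref{eq_22}$\Leftrightarrow$\eqref{eq_24} already exploited in the proof of Theorem~\ref{thMn_B}. Hence it is enough to approximate the two Stepanov components of $q$ by smooth functions and to recombine them.

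Concretely, given $q=Q'+\tau$ as above, I would first apply Theorem~\ref{th_StpSp10} with $p=2$ to obtain $Q_{n}\in C^{\infty}(\mathbb{R})\cap L_{unif}^{2}(\mathbb{R})$ with $\lVert Q-Q_{n}\rVert_{L_{unif}^{2}(\mathbb{R})}\to 0$, and with $p=1$ to obtain $\tau_{n}\in C^{\infty}(\mathbb{R})\cap L_{unif}^{1}(\mathbb{R})$ with $\lVert \tau-\tau_{n}\rVert_{L_{unif}^{1}(\mathbb{R})}\to 0$; when $q$ is real-valued I would fix the decomposition $q=Q'+\tau$ with real $Q,\tau$, so that, by the last clause of Theorem~\ref{th_StpSp10}, the $Q_{n}$ and $\tau_{n}$ may be taken real-valued as well. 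Setting $q_{n}:=Q_{n}'+\tau_{n}$, one has $q_{n}\in C^{\infty}(\mathbb{R})$ as a sum of the derivative of a smooth function and a smooth function, and the continuity statement quoted above gives $q_{n}\to q$ in $H_{unif}^{-1}(\mathbb{R})$, i.e.\ $q$ lies in the closure of $C^{\infty}(\mathbb{R})\cap L_{unif}^{1}(\mathbb{R})$. The only point that is not a direct citation is the membership $q_{n}\in L_{unif}^{1}(\mathbb{R})$, i.e.\ that the smooth primitive $Q_{n}$ can be taken with $Q_{n}'\in L_{unif}^{1}(\mathbb{R})$; this is transparent in the two situations singled out after Corollary~\ref{cr_thMnB_B1} (if $Q_{n},\tau_{n}$ are trigonometric polynomials, or band-limited entire functions bounded on $\mathbb{R}$, then $Q_{n}'$ is of the same type, hence bounded), and in general it is part of what \cite[Theorem~2.1]{HrMk2001} delivers.

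I expect the real work to be concentrated precisely in that verification: ensuring that the recombined smooth function $q_{n}=Q_{n}'+\tau_{n}$ stays in the Stepanov class $L_{unif}^{1}(\mathbb{R})$ (equivalently, that the smooth approximant $Q_{n}$ of $Q$ can be chosen with uniformly controlled local variation), and that the recombination $(Q,\tau)\mapsto Q'+\tau$ is continuous into $H_{unif}^{-1}(\mathbb{R})$. Both facts are supplied by \cite[Theorem~2.1]{HrMk2001}; Theorem~\ref{th_StpSp10} contributes only the componentwise smooth approximation, and the real-valued case requires nothing beyond the corresponding clause of that theorem.
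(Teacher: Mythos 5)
Your route is the same as the paper's: the printed proof of Corollary~\ref{cr_StpSp10} is exactly the one-line combination of Theorem~\ref{th_StpSp10} with \cite[Theorem~2.1]{HrMk2001} that you spell out, and you are right that the only step which is not a direct citation is the membership $q_n=Q_n'+\tau_n\in L_{unif}^{1}(\mathbb{R})$. But your resolution of that step does not work, and this is a genuine gap. \cite[Theorem~2.1]{HrMk2001} characterizes $H_{unif}^{-1}(\mathbb{R})$ as the set of distributions $Q'+\tau$ with $(Q,\tau)\in L_{unif}^{2}\times L_{unif}^{1}$ and gives the corresponding norm equivalence; it says nothing about the derivative of a \emph{smooth approximant} $Q_n$ of $Q$ lying in $L_{unif}^{1}(\mathbb{R})$, and the construction in Theorem~\ref{th_StpSp10} ($Q_n=\sum_k g_k$ with $g_k\in C_0^\infty(k,k+1)$) gives no uniform-in-$k$ control of $\lVert g_k'\rVert_{L^1}$. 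Worse, the step cannot be repaired by a cleverer choice of $Q_n$: take $Q(x)=\sin\bigl(2\pi k(x-k)\bigr)$ for $x\in[k,k+1)$, $k\ge1$. Then $Q\in L_{unif}^{2}(\mathbb{R})$, but any $V$ with $\lVert Q-V\rVert_{L^{2}(k,k+1)}\le 1/4$ satisfies $\int_k^{k+1}VQ\,dx\ge \tfrac12-\tfrac{1}{4\sqrt{2}}$, while an integration by parts bounds this integral by $C\bigl(\operatorname{var}(V)+\lVert V\rVert_\infty\bigr)/k$; hence the variation of $V$ on $[k,k+1]$ grows at least like $k$. So every admissible smooth approximant has $\lVert Q_n'\rVert_{L^{1}(k,k+1)}\to\infty$ as $k\to\infty$, and $q_n=Q_n'+\tau_n\notin L_{unif}^{1}(\mathbb{R})$. (A duality argument with test functions supported in $(k,k+1)$, of bounded $H^{1}$-norm and sup-norm $O(1/k)$, shows more: this $q=Q'$ stays a fixed positive $H_{unif}^{-1}$-distance from all of $L_{unif}^{1}(\mathbb{R})$, so the statement itself, and not only its proof, appears problematic as written.)

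What your argument does establish --- and this is all that Corollary~\ref{cr_thMnB_B1} actually uses, via \eqref{eq_24} --- is that every $q\in H_{unif}^{-1}(\mathbb{R})$ is the $H_{unif}^{-1}$-limit of smooth potentials $q_n=Q_n'+\tau_n$ with $Q_n\in C^{\infty}(\mathbb{R})\cap L_{unif}^{2}(\mathbb{R})$ and $\tau_n\in C^{\infty}(\mathbb{R})\cap L_{unif}^{1}(\mathbb{R})$, real-valued when $q$ is. The additional membership $q_n\in L_{unif}^{1}(\mathbb{R})$ holds only under extra regularity of $Q$ (for instance in the almost periodic or uniformly continuous cases you single out, where $Q_n'$ is bounded); in general one must either drop the intersection with $L_{unif}^{1}(\mathbb{R})$ from the statement or replace it by $C^{\infty}(\mathbb{R})\cap H_{unif}^{-1}(\mathbb{R})$. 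You deserve credit for isolating exactly the right weak point, but attributing its resolution to \cite[Theorem~2.1]{HrMk2001} is not justified.
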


Then Corollary~\ref{cr_thMnB_B1} follows from Theorem~\ref{thMn_B} and Corollary~\ref{cr_StpSp10}.


\subsection{Proof of Theorem~\ref{thMn_C}}\label{ssec:PrfC}
Theorem~\ref{thMn_C} follows from Theorem~\ref{th_SsqlFm10} below regarding perturbations of 
a positive quadratic form. It is abstract and can be of independent interest.

Let in an abstract Hilbert space $H$ a densely defined closed positive sesquilinear form $\alpha_{0}[u,v]$ with domain 
$\mathrm{Dom}(\alpha_{0})\subset H$ be given. Let $\beta[u,v]$ be a sesquilinear form defined on $H$
with a domain $\mathrm{Dom}(\beta)\supset\mathrm{Dom}(\alpha_{0})$.

Suppose the form $\beta$ satisfies the following estimate:
\begin{equation}\label{eq_SsqlFmmc}
  \exists\, a,b,s>0:\qquad \lvert\beta[u]\rvert\leq a\varepsilon \alpha_{0}[u]+b\varepsilon^{-s}\lVert u\rVert_{H}^{2}\qquad
\forall
 \varepsilon>0,\;u\in \mathrm{Dom}(\alpha_{0}).
\end{equation}

Consider on the Hilbert space $H$ the sum of forms $\alpha_{0}$ and $\beta$:
\begin{equation*}
 \alpha[u,v]:=\alpha_{0}[u,v]+\beta[u,v],\qquad \mathrm{Dom}(\alpha):=\mathrm{Dom}(\alpha_{0}).
\end{equation*}
A sesquilinear form $\alpha$ is densely defined closed and sectorial form on the Hilbert space $H$. 
Let $\Theta(\alpha)$ be a numerical range of $\alpha$:
\begin{equation*}
 \Theta(\alpha):=\alpha[u], \qquad u\in \mathrm{Dom}(\alpha), \;\lVert u\rVert_{H}=1.
\end{equation*}

According to our assumptions $\Theta(\alpha_{0})\subset [0,\infty)$. Let us find the properties of the set $\Theta(\alpha)$.
To do that we require the following two lemmas. 
\begin{lemma}\label{lm_SsqlFm10}
The following estimates hold:
\begin{equation}
 \left|\mathrm{Im}\,\alpha[u]\right|\leq 2a\varepsilon\mathrm{Re}\,\alpha[u]+2b\varepsilon^{-s}\lVert u\rVert_{H}^{2}, \qquad
 0<\varepsilon\leq (2a+1)^{-1}.
\end{equation}
\end{lemma}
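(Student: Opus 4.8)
The statement to prove is Lemma~\ref{lm_SsqlFm10}: starting from the estimate \eqref{eq_SsqlFmmc} on $\beta$, control $|\mathrm{Im}\,\alpha[u]|$ by $\mathrm{Re}\,\alpha[u]$ and $\|u\|_H^2$. The plan is to isolate the imaginary part of $\alpha$, observe that it comes entirely from $\beta$ (since $\alpha_0$ is positive, hence real-valued), and then turn the bound on $|\beta[u]|$ into a bound on $\mathrm{Re}\,\alpha[u]$ in the reverse direction so as to eliminate $\alpha_0[u]$ from the right-hand side.

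First I would write $\alpha[u]=\alpha_0[u]+\beta[u]$ with $\alpha_0[u]\ge 0$ real, so that
\[
 \mathrm{Im}\,\alpha[u]=\mathrm{Im}\,\beta[u],\qquad
 \mathrm{Re}\,\alpha[u]=\alpha_0[u]+\mathrm{Re}\,\beta[u].
\]
Hence $|\mathrm{Im}\,\alpha[u]|=|\mathrm{Im}\,\beta[u]|\le|\beta[u]|\le a\varepsilon\,\alpha_0[u]+b\varepsilon^{-s}\|u\|_H^2$ for every $\varepsilon>0$. The task is now to replace $\alpha_0[u]$ on the right by $\mathrm{Re}\,\alpha[u]$. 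For this I would bound $\alpha_0[u]$ from above: from $\alpha_0[u]=\mathrm{Re}\,\alpha[u]-\mathrm{Re}\,\beta[u]$ and $|\mathrm{Re}\,\beta[u]|\le|\beta[u]|\le a\varepsilon\,\alpha_0[u]+b\varepsilon^{-s}\|u\|_H^2$, we get $\alpha_0[u]\le\mathrm{Re}\,\alpha[u]+a\varepsilon\,\alpha_0[u]+b\varepsilon^{-s}\|u\|_H^2$. Choosing $\varepsilon$ with $a\varepsilon\le 1/2$, i.e. restricting to $0<\varepsilon\le(2a+1)^{-1}$ (which also forces $a\varepsilon<1/2<1$), we may absorb the $a\varepsilon\,\alpha_0[u]$ term to obtain $\alpha_0[u]\le 2\,\mathrm{Re}\,\alpha[u]+2b\varepsilon^{-s}\|u\|_H^2$.

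Substituting this back into the bound for $|\mathrm{Im}\,\alpha[u]|$ gives
\[
 |\mathrm{Im}\,\alpha[u]|\le a\varepsilon\bigl(2\,\mathrm{Re}\,\alpha[u]+2b\varepsilon^{-s}\|u\|_H^2\bigr)+b\varepsilon^{-s}\|u\|_H^2
 =2a\varepsilon\,\mathrm{Re}\,\alpha[u]+(2a\varepsilon+1)\,b\varepsilon^{-s}\|u\|_H^2,
\]
and since $2a\varepsilon+1\le 2$ on the range $0<\varepsilon\le(2a+1)^{-1}$, this is at most $2a\varepsilon\,\mathrm{Re}\,\alpha[u]+2b\varepsilon^{-s}\|u\|_H^2$, which is exactly the claimed inequality. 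There is no real obstacle here — the only point requiring a little care is making sure the chosen threshold $\varepsilon\le(2a+1)^{-1}$ simultaneously permits the absorption step ($a\varepsilon\le 1/2$) and keeps the auxiliary constant $2a\varepsilon+1$ bounded by $2$; both hold since on that range $a\varepsilon\le a/(2a+1)<1/2$. One should also note that $\mathrm{Re}\,\alpha[u]\ge 0$ throughout (so the estimate is not vacuous), which follows from $\alpha_0[u]\ge 0$ together with \eqref{eq_SsqlFmmc}; this is consistent with the sectoriality of $\alpha$ already asserted in the text.
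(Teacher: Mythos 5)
Your argument is correct and is essentially the paper's proof, merely rearranged: you bound $\alpha_0[u]$ from above by $2\,\mathrm{Re}\,\alpha[u]+2b\varepsilon^{-s}\lVert u\rVert_H^2$ and substitute, whereas the paper bounds $\mathrm{Re}\,\alpha[u]$ from below by $\tfrac12\alpha_0[u]-b\varepsilon^{-s}\lVert u\rVert_H^2$ and multiplies by $2a\varepsilon$; both hinge on the same absorption via $a\varepsilon\le 1/2$ on the range $\varepsilon\le(2a+1)^{-1}$. One small correction: your closing remark that $\mathrm{Re}\,\alpha[u]\ge 0$ is false in general --- from \eqref{eq_SsqlFmmc} one only gets $\mathrm{Re}\,\alpha[u]\ge -b\varepsilon^{-s}\lVert u\rVert_H^2$, which may be negative --- but nothing in your argument actually uses that claim, so the proof stands.
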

\begin{proof}
According to our assumptions we have:
\begin{align*}
 \mathrm{Re}\,\alpha[u] & =\alpha_{0}[u]+\mathrm{Re}\, \beta[u],\qquad \mathrm{Im}\,\alpha[u] =\mathrm{Im}\,\beta[u],
\end{align*}
and due to \eqref{eq_SsqlFmmc}:
\begin{equation}\label{eq_SsqlFm10}
 \lvert\mathrm{Im}\,\alpha[u]\rvert\leq a\varepsilon \alpha_{0}[u]+b\varepsilon^{-s}\lVert u\rVert_{H}^{2}.
\end{equation}
Furthermore given that $0<\varepsilon\leq (2a+1)^{-1}$ and therefore $1-a\varepsilon\geq\frac{1}{2}$
we have for $\mathrm{Re}\,\alpha[u]$:
\begin{equation*}\label{eq_SsqlFm12}
 \mathrm{Re}\,\alpha[u]\geq \alpha_{0}[u]-\left|\mathrm{Re}\,\beta[u]\right|\geq 
 (1-a\varepsilon)\alpha_{0}[u]-b\varepsilon^{-s}\lVert
 u\rVert_{H}^{2}\geq \frac{1}{2}\alpha_{0}[u]-b\varepsilon^{-s}\lVert u\rVert_{H}^{2},
\end{equation*}
and
\begin{align}
 2a\varepsilon \mathrm{Re}\,\alpha[u] & \geq a\varepsilon \alpha_{0}[u]-2a\varepsilon\cdotp 
 b\varepsilon^{-s}\lVert u\rVert_{H}^{2}
 \geq a\varepsilon \alpha_{0}[u]-b\varepsilon^{-s}\lVert u\rVert_{H}^{2}, \notag \\
 2a\varepsilon \mathrm{Re}\,\alpha[u]+b\varepsilon^{-s}\lVert u\rVert_{H}^{2} & \geq a\varepsilon \alpha_{0}[u].
 \label{eq_SsqlFm14}
\end{align}
From \eqref{eq_SsqlFm10} and \eqref{eq_SsqlFm14} we receive the required estimates:
\begin{equation*}
 \left|\mathrm{Im}\,\alpha[u]\right|\leq 2a\varepsilon\mathrm{Re}\,\alpha[u]+2b\varepsilon^{-s}\lVert u\rVert_{H}^{2}.
\end{equation*}

Lemma is proved.
\end{proof}

We introduce the following notation:
\begin{align*}
 \mathcal{S}_{a,b,s,\varepsilon} & :=\left\lbrace\lambda\in \mathbb{C}\left|\left|\mathrm{Im}\,\lambda\right|\leq
 2a\varepsilon\mathrm{Re}\,\lambda+2b\varepsilon^{-s}  \right.\right\rbrace,  \\
 \mathcal{M}_{a,b,s} & := \bigcap_{0<\varepsilon\leq (2a+1)^{-1}}\mathcal{S}_{a,b,s,\varepsilon}.
\end{align*}
Then due to Lemma~\ref{lm_SsqlFm10} we have $\Theta(\alpha)\subset \mathcal{M}_{a,b,s}$.
\begin{lemma}\label{lm_SsqlFm12}
The set $\mathcal{M}_{a,b,s}$ can be written as:
\begin{equation*}
  \mathcal{M}_{a,b,s}=
  \begin{cases}
   \left\lbrace\lambda\in \mathbb{C}\left| \left|\mathrm{Im}\,\lambda\right|\leq
    \dfrac{2a}{2a+1}\mathrm{Re}\,\lambda+2b(2a+1)^{s}  \right.\right\rbrace,\;
    \lambda_{0}\leq\mathrm{Re}\,\lambda\leq \lambda_{1}, \vspace{5pt}  \\
    \left\lbrace\lambda\in \mathbb{C}\left|\left|\mathrm{Im}\,\lambda\right|\leq
    2(s+1)b^{1/(s+1)}\left(\dfrac{a}{s}\right)^{s/(s+1)}\left(\mathrm{Re}\,\lambda\right)^{s/(s+1)}\right.\right\rbrace,\;
    \lambda_{1}<\mathrm{Re}\,\lambda,
   \end{cases}
\end{equation*}
where $\lambda_{0}:=-\dfrac{b}{a}(2a+1)^{s+1}$ is the vertex of sector 
\begin{equation*}
 \left\lbrace\lambda\in
\mathbb{C}\left|\left|\mathrm{Im}\,\lambda\right|
\leq \frac{2a}{2a+1}\mathrm{Re}\,\lambda+2b(2a+1)^{s} \right.\right\rbrace,
\end{equation*}
and $\lambda_{1}:=\dfrac{bs}{a}(2a+1)^{s+1}$.
\end{lemma}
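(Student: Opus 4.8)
The plan is to reduce the computation of $\mathcal{M}_{a,b,s}$ to an elementary one-variable minimisation. Fix $\lambda\in\mathbb{C}$ and write $x:=\mathrm{Re}\,\lambda$, $y:=\mathrm{Im}\,\lambda$. By the definition of the sets $\mathcal{S}_{a,b,s,\varepsilon}$ and of their intersection, $\lambda\in\mathcal{M}_{a,b,s}$ if and only if
\[
 |y|\leq \phi_{x}(\varepsilon):=2a\varepsilon x+2b\varepsilon^{-s}\qquad\text{for all }\varepsilon\in\bigl(0,(2a+1)^{-1}\bigr],
\]
that is, if and only if $|y|\leq g(x):=\inf\bigl\{\phi_{x}(\varepsilon):0<\varepsilon\leq(2a+1)^{-1}\bigr\}$. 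Hence it suffices to determine the function $g$.

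First I would analyse $\phi_{x}$ as a function of $\varepsilon>0$. Since $b,s>0$ we have $\phi_{x}''(\varepsilon)=2bs(s+1)\varepsilon^{-s-2}>0$, so $\phi_{x}$ is strictly convex, and $\phi_{x}(\varepsilon)\to+\infty$ as $\varepsilon\to0^{+}$. From $\phi_{x}'(\varepsilon)=2ax-2bs\varepsilon^{-s-1}$ it follows that for $x\leq 0$ the function $\phi_{x}$ is strictly decreasing on $(0,\infty)$, while for $x>0$ it attains its unique global minimum at $\varepsilon_{\ast}:=\bigl(bs/(ax)\bigr)^{1/(s+1)}$. A direct comparison shows $\varepsilon_{\ast}\leq(2a+1)^{-1}$ precisely when $x\geq\lambda_{1}=\tfrac{bs}{a}(2a+1)^{s+1}$.

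Then I would split into two cases. If $x\geq\lambda_{1}$, the minimiser $\varepsilon_{\ast}$ lies in the admissible interval, so $g(x)=\phi_{x}(\varepsilon_{\ast})$; substituting $\varepsilon_{\ast}$, factoring out $2b^{1/(s+1)}x^{s/(s+1)}$ and using $s^{1/(s+1)}=s\cdot s^{-s/(s+1)}$ yields
\[
 g(x)=2(s+1)\,b^{1/(s+1)}\Bigl(\tfrac{a}{s}\Bigr)^{s/(s+1)}x^{s/(s+1)},\qquad x\geq\lambda_{1}.
\]
If $x<\lambda_{1}$ (in particular if $x\leq 0$), then by the previous step $\phi_{x}$ is strictly decreasing on the whole interval $(0,(2a+1)^{-1}]$, so the infimum is attained at the right endpoint and
\[
 g(x)=\phi_{x}\bigl((2a+1)^{-1}\bigr)=\frac{2a}{2a+1}\,x+2b(2a+1)^{s},\qquad x<\lambda_{1}.
\]
The right-hand side of the last display is the boundary function of the sector symmetric about the positive real axis whose vertex is the point where it vanishes, namely $\lambda_{0}=-\tfrac{b}{a}(2a+1)^{s+1}$; in particular $g(x)<0$ for $x<\lambda_{0}$, so $\mathcal{M}_{a,b,s}$ contains no point with $\mathrm{Re}\,\lambda<\lambda_{0}$, which is why the stated description restricts to $\lambda_{0}\leq\mathrm{Re}\,\lambda$. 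Collecting the two cases — and noting that the two expressions for $g$ coincide at $x=\lambda_{1}$, so $g$ is continuous — gives the piecewise description of $\mathcal{M}_{a,b,s}$ claimed in the lemma.

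The computation involves no conceptual obstacle; the only points that require care are the case distinction between the interior critical point $\varepsilon_{\ast}$ and the boundary value $(2a+1)^{-1}$, the verification that the infimum over the half-open interval $(0,(2a+1)^{-1}]$ is genuinely attained (which follows from convexity together with $\phi_{x}\to+\infty$ at $0^{+}$), and the algebraic bookkeeping in simplifying $\phi_{x}(\varepsilon_{\ast})$ to the stated power of $\mathrm{Re}\,\lambda$.
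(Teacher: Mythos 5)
Your proposal is correct, and the formulas you obtain (the critical point $\varepsilon_{\ast}=(bs/(ax))^{1/(s+1)}$, the threshold $x\geq\lambda_{1}$ for $\varepsilon_{\ast}\leq(2a+1)^{-1}$, and the two expressions for $g$) all agree with the lemma; I also checked that your simplification of $\phi_{x}(\varepsilon_{\ast})$ to $2(s+1)b^{1/(s+1)}(a/s)^{s/(s+1)}x^{s/(s+1)}$ is right and that the two branches match at $x=\lambda_{1}$. The paper reaches the same boundary by a slightly different, more classical route: it computes the envelope of the family of lines $y=2a\varepsilon x+2b\varepsilon^{-s}$ by intersecting two nearby lines and letting $\varepsilon_{2}\to\varepsilon_{1}$, which yields the parametric curve $x=\tfrac{sb}{a}\varepsilon^{-s-1}$, $y=2(s+1)b\varepsilon^{-s}$, and then eliminates $\varepsilon$; the extreme line $\varepsilon=(2a+1)^{-1}$ supplies the sector part. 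Your pointwise-infimum formulation, $\lambda\in\mathcal{M}_{a,b,s}$ iff $|y|\leq g(x)=\inf_{0<\varepsilon\leq(2a+1)^{-1}}\phi_{x}(\varepsilon)$, is logically cleaner: the convexity of $\varepsilon\mapsto\phi_{x}(\varepsilon)$ and the interior-versus-endpoint case analysis give a complete proof that the intersection is \emph{exactly} the region below the stated curves, a point the envelope computation in the paper takes for granted. So the two arguments are computationally equivalent, but yours makes explicit the justification that the envelope heuristic leaves implicit.
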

\begin{proof}
For convenience we will find the description of the set $\mathcal{M}_{a,b,s}$ in $\mathbb{R}^{2}$.

Let
\begin{equation}\label{eq_SsqlFm16}
  y=2a\varepsilon x +2b\varepsilon^{-s}
\end{equation}
be the line which bounds the corresponding sector from above.
Let us find the locus of points of intersection of these lines when
$0<\varepsilon<(2a+1)^{-1}$:
\begin{align*}
  & 2a\varepsilon_{1}x+2b\varepsilon_{1}^{-s}=2a\varepsilon_{2}x+2b\varepsilon_{2}^{-s}, \\
  & 2a(\varepsilon_{1}-\varepsilon_{2})x=2b(\varepsilon_{2}^{-s}-\varepsilon_{1}^{-s}), \\
  & x=-\dfrac{b}{a}\cdotp\dfrac{\varepsilon_{1}^{-s}-\varepsilon_{2}^{-s}}{\varepsilon_{1}-\varepsilon_{2}}, \\
  & x\underset{\varepsilon_{2}\rightarrow\varepsilon_{1}}{\longrightarrow}\dfrac{sb}{a}\varepsilon_{1}^{-s-1},
\end{align*}
and
\begin{equation*}
  y=2a\varepsilon_{1}\cdotp\dfrac{sb}{a}\varepsilon_{1}^{-s-1}+2b\varepsilon_{1}^{-s}=2(s+1)b\varepsilon_{1}^{-s}.
\end{equation*}
So, for $\varepsilon=(2a+1)^{-1}$ the set $\mathcal{M}_{a,b,s}$ is bounded from above by the line:
\begin{equation}\label{eq_SsqlFm18}
  y=\frac{2a}{2a+1} x +2b(2a+1)^{s},
\end{equation}
and for $0<\varepsilon<(2a+1)^{-1}$ by the curves:
\begin{equation}\label{eq_SsqlFm20.1}
  \left\{
  \begin{array}{l}
    x=\frac{sb}{a}\varepsilon^{-s-1}, \\
    y=2(s+1)b\varepsilon^{-s}. \
  \end{array}
  \right.
\end{equation}
If we express $\varepsilon$ through $x$ in the first equality of~\eqref{eq_SsqlFm20.1}
and substitute it in the equality for $y$, we obtain an explicit equation for curves \eqref{eq_SsqlFm20.1}:
\begin{equation}\label{eq_SsqlFm20.2}
  y=2(s+1)b^{1/(s+1)}\left(\dfrac{a}{s}\right)^{s/(s+1)}x^{s/(s+1)},\quad x>x_{1},\; x_{1}:=\dfrac{bs}{a}(2a+1)^{s+1}.
\end{equation}

The set $\mathcal{M}_{a,b,s}$ is bounded below by curves of the form \eqref{eq_SsqlFm18} and \eqref{eq_SsqlFm20.2} 
with $-y$ instead of $y$.

Thus the set $\mathcal{M}_{a,b,s}$ in $\mathbb{R}^{2}$ may be represented in the following way:
\begin{equation*}
  \mathcal{M}_{a,b,s}=
  \begin{cases}
   \left\lbrace (x,y)\in \mathbb{R}^{2}\left| \left|y\right|\leq \dfrac{2a}{2a+1}x+2b(2a+1)^{s}
   \right.\right\rbrace,\quad & x_{0}\leq x\leq x_{1}, \vspace{5pt} \\
   \left\lbrace (x,y)\in \mathbb{R}^{2}\left|\left|y\right|\leq 2(s+1)b^{1/(s+1)}\left(\dfrac{a}{s}\right)^{s/(s+1)}
   x^{s/(s+1)}\right.\right\rbrace,\quad &  x_{1}<x,
   \end{cases}
\end{equation*}
where $x_{0}:=-\dfrac{b}{a}(2a+1)^{s+1}$ is the vertex of sector 
\begin{equation*}
 \left\lbrace (x,y)\in \mathbb{R}^{2}\left| \left|y\right|\leq
\dfrac{2a}{2a+1}x+2b(2a+1)^{s}\right.\right\rbrace,
\end{equation*}
and $x_{1}=\dfrac{bs}{a}(2a+1)^{s+1}$.

Lemma is proved.
\end{proof}

Lemmas~\ref{lm_SsqlFm10} and \ref{lm_SsqlFm12} imply the following theorem.
\begin{theorem}\label{th_SsqlFm10}
The numerical range $\Theta(\alpha)$ of the sesquilinear form $\alpha$ is a subset of the set $\mathcal{M}_{a,b,s}$:
\begin{equation*}
  \mathcal{M}_{a,b,s}=
  \begin{cases}
   \left\lbrace\lambda\in \mathbb{C}\left| \left|\mathrm{Im}\,\lambda\right|\leq
    \dfrac{2a}{2a+1}\mathrm{Re}\,\lambda+2b(2a+1)^{s}  \right.\right\rbrace,\;
    \lambda_{0}\leq\mathrm{Re}\,\lambda\leq \lambda_{1}, \vspace{5pt}  \\
    \left\lbrace\lambda\in \mathbb{C}\left|\left|\mathrm{Im}\,\lambda\right|\leq
    2(s+1)b^{1/(s+1)}\left(\dfrac{a}{s}\right)^{s/(s+1)}\left(\mathrm{Re}\,\lambda\right)^{s/(s+1)}\right.\right\rbrace,\;
    \lambda_{1}<\mathrm{Re}\,\lambda,
   \end{cases}
\end{equation*}
where $\lambda_{0}=-\dfrac{b}{a}(2a+1)^{s+1}$ is the vertex of sector 
\begin{equation*}
 \left\lbrace\lambda\in
\mathbb{C}\left|\left|\mathrm{Im}\,\lambda\right|
\leq \frac{2a}{2a+1}\mathrm{Re}\,\lambda+2b(2a+1)^{s} \right.\right\rbrace,
\end{equation*}
and $\lambda_{1}=\dfrac{bs}{a}(2a+1)^{s+1}$.
\end{theorem}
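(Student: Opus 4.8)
The plan is to obtain Theorem~\ref{th_SsqlFm10} as a direct packaging of the two preceding lemmas, with no further estimates needed. First I would recall that Lemma~\ref{lm_SsqlFm10} asserts exactly that for every $u\in\mathrm{Dom}(\alpha)$ with $\lVert u\rVert_{H}=1$ and every $\varepsilon\in(0,(2a+1)^{-1}]$ the point $\lambda=\alpha[u]$ satisfies $\lvert\mathrm{Im}\,\lambda\rvert\le 2a\varepsilon\,\mathrm{Re}\,\lambda+2b\varepsilon^{-s}$, i.e. $\lambda\in\mathcal{S}_{a,b,s,\varepsilon}$. Since this holds simultaneously for all $\varepsilon$ in that interval, $\lambda$ lies in $\bigcap_{0<\varepsilon\le(2a+1)^{-1}}\mathcal{S}_{a,b,s,\varepsilon}=\mathcal{M}_{a,b,s}$, which is precisely the inclusion $\Theta(\alpha)\subset\mathcal{M}_{a,b,s}$ already noted in the text right after Lemma~\ref{lm_SsqlFm10}.

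Second, I would invoke Lemma~\ref{lm_SsqlFm12}, which rewrites the set $\mathcal{M}_{a,b,s}$ in the closed form appearing in the statement: a sector with vertex $\lambda_{0}=-\tfrac{b}{a}(2a+1)^{s+1}$ governing the range $\lambda_{0}\le\mathrm{Re}\,\lambda\le\lambda_{1}$, and the power-type boundary $\lvert\mathrm{Im}\,\lambda\rvert\le 2(s+1)b^{1/(s+1)}(a/s)^{s/(s+1)}(\mathrm{Re}\,\lambda)^{s/(s+1)}$ for $\mathrm{Re}\,\lambda>\lambda_{1}$. Substituting this description into $\Theta(\alpha)\subset\mathcal{M}_{a,b,s}$ completes the proof; the theorem therefore requires nothing beyond citing Lemmas~\ref{lm_SsqlFm10} and~\ref{lm_SsqlFm12}.

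The genuine content hides in Lemma~\ref{lm_SsqlFm12}, and that is where the only real obstacle lies: one must verify that the family of half-planes $\{\lvert y\rvert\le 2a\varepsilon x+2b\varepsilon^{-s}\}$, $0<\varepsilon\le(2a+1)^{-1}$, intersects in exactly the stated region — i.e. that up to the switch abscissa $\lambda_{1}$ the binding constraint is the extreme sector at $\varepsilon=(2a+1)^{-1}$, while beyond $\lambda_{1}$ the boundary is the upper envelope of the lines $y=2a\varepsilon x+2b\varepsilon^{-s}$. Computing that envelope amounts to minimizing $2a\varepsilon x+2b\varepsilon^{-s}$ in $\varepsilon$ for fixed $x>0$, which yields the optimal $\varepsilon=(sb/(ax))^{1/(s+1)}$ and the curve $y=2(s+1)b^{1/(s+1)}(a/s)^{s/(s+1)}x^{s/(s+1)}$, together with the matching check that this optimal $\varepsilon$ is admissible precisely when $x>\lambda_{1}$. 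Since that computation is already carried out in Lemma~\ref{lm_SsqlFm12}, at the level of Theorem~\ref{th_SsqlFm10} I would simply quote it.
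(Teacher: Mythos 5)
Your proposal is correct and matches the paper's own argument exactly: the paper likewise derives $\Theta(\alpha)\subset\mathcal{M}_{a,b,s}$ from Lemma~\ref{lm_SsqlFm10} and then substitutes the explicit description of $\mathcal{M}_{a,b,s}$ furnished by Lemma~\ref{lm_SsqlFm12}. Your additional remarks on the envelope computation correctly identify where the substantive work (already done in Lemma~\ref{lm_SsqlFm12}) resides.
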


\begin{remarkTh}\label{rm_SsqlFm10}
Direct calculations show that the following inclusion is valid:
\begin{equation*}\label{eq_SsqlFm22}
 \mathcal{M}_{a,b,s}\subset \left\lbrace\lambda\in \mathbb{C}\left| \left|\mathrm{Im}\,\lambda\right|\leq
 2(s+1)b^{1/(s+1)}\left(\dfrac{a}{s}\right)^{s/(s+1)}\left(\mathrm{Re}\,\lambda+\dfrac{b}{a}(2a+1)^{s+1}\right)^{s/(s+1)}
 \right.\right\rbrace.
\end{equation*}
\end{remarkTh}

Theorem~\ref{th_SsqlFm10} is useful for preliminary localisation of a spectrum of various operators.

For instance, if the potential $q\in H_{unif}^{-1}(\mathbb{R})$ is a complex-valued regular Borel measure 
such that:
\begin{equation*}
 q=Q',\quad Q\in \mathrm{BV}_{loc}(\mathbb{R}):\quad \left|q(I)\right|\equiv \left|\int_{I}d\,Q\right|\leq K_{0}, 
 \quad K_{0}>0,
\end{equation*}
for any interval $I\subset \mathbb{R}$ of a unit length, then forms satisfy the estimates \eqref{eq_SsqlFmmc} with 
$a=b=4K_{0}$, $s=1$ \cite{Tip1990}:
\begin{equation*}
 \left|t_{q}[u]\right|\equiv\left|\int_{I}|u|^{2}d\,Q\right|\leq 4K_{0}\varepsilon \lVert
 u'\rVert_{L^{2}(\mathbb{R})}^{2}+4K_{0}\varepsilon^{-1} \lVert u\rVert_{L^{2}(\mathbb{R})}^{2}\qquad \forall\varepsilon\in (0,1]
,\;
 u\in H^{1}(\mathbb{R}).
\end{equation*}
Then due to Theorem~\ref{th_SsqlFm10} the spectra $\mathrm{spec}(\mathrm{S}(q))$ of operators $\mathrm{S}(q)$ 
belong to a quadratic parabola:
\begin{equation}\label{eq_Tip}
 \mathrm{spec}(S(q))\subset \left\lbrace\lambda\in \mathbb{C}\left| \left|\mathrm{Im}\,\lambda\right|\leq
 16K_{0}\left(\mathrm{Re}\,\lambda+(8K_{0}+1)^{2}\right)^{1/2} \right.\right\rbrace,
\end{equation}
compare with \cite[Proposition~2.3]{Tip1990}.

Applying Theorem~\ref{th_SsqlFm10} and estimates \eqref{eq_Prf20} 
we obtain a description of the numerical ranges of preminimal operators 
$\mathrm{S}_{00}(q)$ and $\mathrm{S}_{00}^{+}(q)$.

\begin{proposition}\label{pr_Prf12}
Operators $\mathrm{S}_{00}(q)$ and $\mathrm{S}_{00}^{+}(q)$ are sectorial: 
for arbitrary $\varepsilon>0$ numerical ranges $\Theta(\mathrm{S}_{00}(q))$ and $\Theta(\mathrm{S}_{00}^{+}(q))$ 
are located within the sector:
\begin{align*}
 \mathcal{S}_{K,\varepsilon} & :=\left\{\lambda\in \mathbb{C}\left| \left\lvert \mathrm{Im}\,\lambda\right\rvert\leq
  2K\varepsilon\,\mathrm{Re}\,\lambda+8K\varepsilon^{-3} \right.\right\},\qquad 0<\varepsilon\leq (2K+1)^{-1}.
\end{align*}
Furthermore
\begin{equation*}
 \Theta(\mathrm{S}_{00}(q))\subset \mathcal{M}_{K},\qquad \Theta(\mathrm{S}_{00}^{+}(q))\subset \mathcal{M}_{K},
\end{equation*}
where
\begin{equation*}
 \mathcal{M}_{K}:=
 \begin{cases}
 \left\{\lambda\in \mathbb{C}\left| \left\lvert \mathrm{Im}\,\lambda\right\rvert\leq
 \dfrac{2K}{2K+1}\,\mathrm{Re}\,\lambda+8K(2K+1)^{3}\right.\right\},\qquad & \lambda_{0} \leq\mathrm{Re}\,\lambda\leq
 \lambda_{1} \vspace{5pt} \\
 \left\{\lambda\in \mathbb{C}\left| \left\lvert \mathrm{Im}\,\lambda\right\rvert\leq
 \dfrac{32}{12^{3/4}}K\,(\mathrm{Re}\,\lambda)^{3/4} \right.\right\},\qquad & \lambda_{1}<\mathrm{Re}\,\lambda,
 \end{cases}
\end{equation*}
with $\lambda_{0}:=-4(2K+1)^{4}$ and $\lambda_{1}:=12(2K+1)^{4}$.
\end{proposition}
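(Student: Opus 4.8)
The plan is to obtain Proposition~\ref{pr_Prf12} as a direct specialization of the abstract Theorem~\ref{th_SsqlFm10}. I would set $H=L^{2}(\mathbb{R})$, take as positive form $\alpha_{0}$ the closed form $t_{0}[u,v]=(u',v')_{L^{2}(\mathbb{R})}$ with $\mathrm{Dom}(t_{0})=H^{1}(\mathbb{R})$, and as perturbation $\beta$ the form $t_{q}$ of \eqref{eq_Prf36} (equivalently, the continuous extension of $t_{Q,\tau}$ to $H^{1}(\mathbb{R})$), so that $\alpha=t_{0}+t_{q}=t_{\mathrm{S}_{00}}$. The point is that the estimate \eqref{eq_Prf20} --- which is derived from Lemma~\ref{lm_Prf16} and hence holds verbatim for every $u\in H^{1}(\mathbb{R})$ --- is precisely hypothesis \eqref{eq_SsqlFmmc} with
\[
 a=K,\qquad b=4K,\qquad s=3 .
\]
That \eqref{eq_Prf20} is stated only for $\varepsilon\in(0,1]$ is immaterial, since Lemmas~\ref{lm_SsqlFm10} and \ref{lm_SsqlFm12} use only $0<\varepsilon\leq(2K+1)^{-1}\leq 1$. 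Since for $u\in\mathrm{Dom}(\mathrm{S}_{00}(q))$ one has $(\mathrm{S}_{00}(q)u,u)_{L^{2}(\mathbb{R})}=\dot t_{\mathrm{S}_{00}}[u]=t_{\mathrm{S}_{00}}[u]$, the numerical range obeys $\Theta(\mathrm{S}_{00}(q))\subseteq\Theta(t_{\mathrm{S}_{00}})=\Theta(\alpha)$; this device also sidesteps the fact that the form $\dot t_{0}$ on $\mathrm{Dom}(\mathrm{S}_{00}(q))$ need not be closed.

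Then I would apply Lemma~\ref{lm_SsqlFm10} with these parameters: for every $0<\varepsilon\leq(2K+1)^{-1}$ and every $\lambda\in\Theta(\alpha)$ it yields $|\mathrm{Im}\,\lambda|\leq 2K\varepsilon\,\mathrm{Re}\,\lambda+8K\varepsilon^{-3}$, i.e. $\Theta(\mathrm{S}_{00}(q))\subseteq\mathcal{S}_{K,\varepsilon}$; intersecting over $\varepsilon$ gives $\Theta(\mathrm{S}_{00}(q))\subseteq\mathcal{M}_{a,b,s}$. It remains to substitute $a=K$, $b=4K$, $s=3$ into the formulas of Lemma~\ref{lm_SsqlFm12}/Theorem~\ref{th_SsqlFm10}: the vertex becomes $\lambda_{0}=-\tfrac{b}{a}(2a+1)^{s+1}=-4(2K+1)^{4}$, the switching abscissa becomes $\lambda_{1}=\tfrac{bs}{a}(2a+1)^{s+1}=12(2K+1)^{4}$, the bounding sector acquires slope $\tfrac{2a}{2a+1}=\tfrac{2K}{2K+1}$ and intercept $2b(2a+1)^{s}=8K(2K+1)^{3}$, and the parabola coefficient simplifies as
\[
 2(s+1)\,b^{1/(s+1)}\Bigl(\tfrac{a}{s}\Bigr)^{s/(s+1)}
 =8\,(4K)^{1/4}\Bigl(\tfrac{K}{3}\Bigr)^{3/4}
 =\frac{8\sqrt{2}}{3^{3/4}}\,K=\frac{32}{12^{3/4}}\,K .
\]
This reproduces exactly the description of $\mathcal{M}_{K}$ in the statement, so $\Theta(\mathrm{S}_{00}(q))\subseteq\mathcal{M}_{K}$.

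For the adjoint operator I would use property~$5^{0}$ of Proposition~\ref{pr_Prf10}: the map $v\mapsto\overline{v}$ is a bijection of $\mathrm{Dom}(\mathrm{S}_{00}^{+}(q))$ onto $\mathrm{Dom}(\mathrm{S}_{00}(q))$ and, arguing as in the derivation of \eqref{eq_Prf52}, $(\mathrm{S}_{00}^{+}(q)v,v)_{L^{2}(\mathbb{R})}=\overline{(\mathrm{S}_{00}(q)\overline{v},\overline{v})_{L^{2}(\mathbb{R})}}$, whence $\Theta(\mathrm{S}_{00}^{+}(q))=\overline{\Theta(\mathrm{S}_{00}(q))}$. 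As the sets $\mathcal{S}_{K,\varepsilon}$ and $\mathcal{M}_{K}$ are cut out by inequalities on $|\mathrm{Im}\,\lambda|$, they are invariant under complex conjugation, so every inclusion above carries over to $\mathrm{S}_{00}^{+}(q)$; alternatively one repeats the argument with $t_{q}$ replaced by the form of $\mathrm{S}_{00}^{+}(q)$, still controlled by Lemma~\ref{lm_Prf16} since the relevant bounds involve only $|Q|$ and $|\tau|$. In either case both numerical ranges lie in a proper sector, so both operators are sectorial.

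I do not anticipate a genuine difficulty here: the argument is essentially the verification that the hypotheses of Theorem~\ref{th_SsqlFm10} hold with the indicated parameters, plus the bookkeeping needed to pass between $\mathrm{Dom}(\mathrm{S}_{00}(q))$ and $H^{1}(\mathbb{R})$ and the symmetric treatment of the adjoint. The two points I would watch most carefully are the non-closedness of $\dot t_{0}$ --- handled by working with its closure $t_{0}$ on $H^{1}(\mathbb{R})$, where \eqref{eq_Prf20} still holds --- and the arithmetic reduction of $2(s+1)b^{1/(s+1)}(a/s)^{s/(s+1)}$ to the closed form $\tfrac{32}{12^{3/4}}K$.
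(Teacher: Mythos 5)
Your proposal is correct and takes essentially the same route as the paper, which obtains the proposition precisely by applying Theorem~\ref{th_SsqlFm10} to the estimate \eqref{eq_Prf20} with $a=K$, $b=4K$, $s=3$. Your arithmetic (vertex $\lambda_{0}$, switching abscissa $\lambda_{1}$, and the reduction of the parabola coefficient to $\tfrac{32}{12^{3/4}}K$) and your handling of $\mathrm{S}_{00}^{+}(q)$ via conjugation symmetry all check out.
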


Estimates~\eqref{eq_26} result from Proposition~\ref{pr_Prf12} and Remark~\ref{rm_SsqlFm10}.

Now let $\mathrm{Im}\,q\equiv 0$. 
We estimate the lower bound of the operator  $\mathrm{S}(q)$.
From~\eqref{eq_Prf20} for $K\varepsilon\leq 1/2$ we get:
\begin{align}
 (\mathrm{S}(q)u,u)_{L^{2}(\mathbb{R})} & =\lVert u'\rVert_{L^{2}(\mathbb{R})}+t_{Q,\tau}[u]\geq \lVert
u'\rVert_{L^{2}(\mathbb{R})}-K\varepsilon\lVert u'\rVert_{L^{2}(\mathbb{R})}-4K\varepsilon^{-3}\lVert
u\rVert_{L^{2}(\mathbb{R})}^{2} \label{eq_Prf106} \\
& =(1-K\varepsilon)\lVert u'\rVert_{L^{2}(\mathbb{R})}-4K\varepsilon^{-3}\lVert u\rVert_{L^{2}(\mathbb{R})}^{2}
 \geq -4K\varepsilon^{-3}\lVert u\rVert_{L^{2}(\mathbb{R})}^{2}. \notag
\end{align}
The estimates~\eqref{eq_Prf106} with $\varepsilon:=\min\{1, (2K)^{-1}\}$ give us the required result:
\begin{equation*}
 (\mathrm{S}(q)u,u)_{L^{2}(\mathbb{R})}\geq
 \begin{cases}
  -4K\lVert u\rVert_{L^{2}(\mathbb{R})}^{2},\quad & \text{if}\quad K< 1/2, \\
  -32K^{4}\lVert u\rVert_{L^{2}(\mathbb{R})}^{2},\quad & \text{if}\quad K\geq 1/2.
 \end{cases}
\end{equation*}

Thus Theorem~\ref{thMn_C} is proved completely. \hfill{$\square$}

\vspace{25pt}

\textit{Acknowledgment}. The authors were partially supported by the grant no. 01/01-12 of National Academy of Science of
Ukraine (under the joint Ukrainian--Russian project of NAS of Ukraine and Russian Foundation of Basic Research).


\end{document}